\documentclass[12pt]{article}
\usepackage{mathrsfs}
\usepackage{amssymb}
\usepackage{}
\usepackage{amsfonts}
\usepackage{amsfonts,amsmath,amsthm, amssymb}
\usepackage{array}
\usepackage{latexsym, euscript, epic, eepic}
\usepackage{tikz}
\usepackage[noblocks]{authblk}
\usetikzlibrary{matrix}
\pagestyle{plain} \headsep=5mm \headheight=5mm \textwidth=155mm
\textheight=23cm \oddsidemargin=5mm \evensidemargin=5mm
\topmargin=0in

\newtheorem{lemma}{Lemma}[section]
\newtheorem{theorem}{Theorem}[section]
\newtheorem{definition}{Definition}[section]

\newtheorem{remark}{Remark}[section]

\def \R{\mathbb{R}}

\def\R {\Bbb R}

\begin{document}
 
\title{Carleson measures and chord-arc curves \thanks{Research supported by the National Natural Science Foundation of China (Grant Nos. 11501259, 11671175).}}

\author{Huaying Wei \thanks{Department of Mathematics and Statistics, Jiangsu Normal University, Xuzhou 221116, PR China. Email:  6020140058@jsnu.edu.cn.  } ,   Michel Zinsmeister \thanks{MAPMO, Universit\'e d' Orl\'eans, Orl\'eans  Cedex 2, France. Email: zins@univ-orleans.fr}}
\date{}
\maketitle

\begin{center}
\begin{minipage}{120mm}
{\small{\bf Abstract}.
Following Semmes \cite{Se} and Zinsmeister \cite{Zi89}, we continue the study of Carleson measures and their invariance under pull-back and push-forward operators. We also study the analogous statements for vanishing Carleson measures. As an application, we show that some quotient space of the space of chord-arc curves has a natural complex structure.
}

\end{minipage}
\end{center}

{\small{\bf Key words and phrases} \,\,\, Carleson measures,\ vanishing Carleson measures,\ chord-arc curves,\ Bers embedding.}

{\small{\bf 2010 Mathematics Subject Classification} \,\,\, 30C62,\ 30F60,\ 30H35.} \vskip1cm

\section{Introduction}
A positive measure $\mu$ defined in a simply connected domain $\Omega$ is called a Carleson measure (see \cite{Ga}) if
\begin{equation}\label{carlesonnorm}
\|\mu\|_{*} = \sup \{\frac{\mu(\Omega \cap D(z, r))}{r}: z \in \partial \Omega, 0 < r < diameter(\partial\Omega)\} < \infty,
\end{equation}
where $D(z, r)$ is the disk with center $z$ and radius $r$. A Carleson measure $\mu$ is called a vanishing Carleson measure if $\lim_{r\rightarrow 0}\mu(\Omega \cap D(z, r))/r = 0$ uniformly for $z \in \partial \Omega$. We denote by $CM(\Omega)$ and $CM_0(\Omega)$ the set of all Carleson measures and vanishing Carleson measures on $\Omega$, respectively. It is easy to see that $CM(\Omega)$ is a Banach space with the Carleson norm $\|\cdot\|_{*}$. 

Let $\varphi$ be a conformal mapping from the unit disk $\Delta$ onto a simply connected domain $\Omega$. For any $\mu \in CM(\Omega)$, the pull-back of $\mu$ is the measure defined on $\Delta$ by
$$ \varphi^{*}d\mu=|\varphi^{'}|^{-1}d(\mu\circ \varphi).$$
For a Carleson measure $\nu$ on $\Delta$ we define similarly the push-forward of $\nu$ as being the measure on $\Omega$ defined by 
$$(\varphi^{-1})^{*}d\nu = |(\varphi^{-1})^{'}|^{-1}d(\nu\circ \varphi^{-1}).$$

If $\Omega=\Delta$, these two operators are isomorphisms of $CM(\Delta)$  (one being the reciprocal of the other): this is another way of stating  the conformally invariant character of Carleson measures on $\Delta$ as in \cite[p.231]{Ga}. 

In 1989, Zinsmeister \cite{Zi89} proved the following:
\begin{theorem}
Let $\varphi$ be a conformal mapping from the unit disk $\Delta$ onto a simply connected domain $\Omega$. Then the following two statements hold:
\begin{description}
\item[(Z1)] $\log \varphi^{'} \in BMOA(\Delta)$ if and only if the pull-back operator $\varphi^{*}$ is bounded from $CM(\Omega)$ to $CM(\Delta)$;
\item[(Z2)] If $\partial \Omega$ is Ahlfors-regular, then the push-forward operator $(\varphi^{-1})^{*}$ is  bounded from $CM(\Delta)$ to $CM(\Omega)$.
\end{description}
\end{theorem}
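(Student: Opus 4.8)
The plan is to treat both statements through a single change-of-variables identity together with the Koebe distortion theorem, and to reduce everything to estimates over Carleson boxes. Writing $\psi = \varphi^{-1}$ and unwinding the definitions, one finds for a box $S = \Delta \cap D(\zeta,r)$ with $\zeta \in \partial\Delta$ that $\varphi^{*}\mu(S) = \int_{\varphi(S)} |\psi'(w)|\, d\mu(w)$, and dually, for $F = \Omega\cap D(w^{*},\ell)$ with $w^{*}\in\partial\Omega$, that $(\varphi^{-1})^{*}\nu(F) = \int_{\psi(F)} |\varphi'(z)|\, d\nu(z)$. Thus both operators amount to integrating the oscillating weight $|\psi'|$ (resp.\ $|\varphi'|$) against the given Carleson measure over the conformal image of a box. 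The first structural input is that, by Koebe's theorem, a Whitney box $Q$ of $\Delta$ is carried by $\varphi$ to a set $\varphi(Q)\subset\Omega$ of Whitney type, with $\mathrm{diam}\,\varphi(Q)\approx\mathrm{dist}(\varphi(Q),\partial\Omega)\approx |\varphi'(c_Q)|\,\mathrm{diam}\,Q$, where $c_Q$ is the center; in particular $|\varphi'|$ and $|\psi'|$ are essentially constant on Whitney boxes. I would also record at the outset the standard characterization that $\log\varphi'\in BMOA(\Delta)$ is equivalent to $|\varphi''/\varphi'|^{2}(1-|z|^{2})\,dA$ being a Carleson measure on $\Delta$, equivalently to the distribution of $\log|\varphi'|$ over each box $S$ obeying a John--Nirenberg exponential bound around its mean.

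For the forward implication of (Z1) the task is to show $\int_{\varphi(S)} |\psi'|\,d\mu \lesssim \|\mu\|_{*}\,r$. The naive route --- bounding $\varphi(S)$ by a single Carleson box of $\Omega$ and pulling $|\psi'|$ out as its value at the center --- fails, and equally the per-Whitney-box Carleson bound $\mu(\varphi(Q))\lesssim\|\mu\|_{*}\,\mathrm{diam}\,\varphi(Q)$ summed over $Q\subset S$ diverges; the fix, which is the main obstacle, is to use the Carleson condition on $\Omega$ \emph{at every scale simultaneously} while exploiting the BMOA hypothesis to control the oscillation of the weight. Concretely, I would pass to the distribution function $M_{t}=\mu\{w\in\varphi(S): |\psi'(w)|>t\}$, so that $\int_{\varphi(S)}|\psi'|\,d\mu\approx\int_{0}^{\infty}M_{t}\,dt$, and observe that $\{|\psi'|>t\}$ is the $\varphi$-image of the sublevel set $\{z\in S: |\varphi'(z)|<1/t\}$, whose $\Delta$-area decays in $t$ like $(t\,|\psi'(w_{0})|)^{-\alpha}$ by John--Nirenberg applied to $\log|\varphi'|$, with $w_{0}=\varphi((1-r)\zeta)$. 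Converting this area decay, through the Whitney correspondence and the scale-by-scale Carleson bound on $\Omega$, into decay of $M_{t}$ and integrating yields the desired $O(\|\mu\|_{*}r)$ bound; this conversion is the technically delicate point and is exactly where both the analyticity (not merely BMO) and the Carleson structure of $\mu$ must be combined.

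For the converse of (Z1) I would argue by testing: boundedness of $\varphi^{*}$ must hold for every $\mu\in CM(\Omega)$, and it suffices to exhibit a family of explicit Carleson measures on $\Omega$ whose pull-backs detect the defining measure of $\log\varphi'\in BMOA$. Lebesgue area measure $dA$ on the (bounded) domain $\Omega$ is Carleson and pulls back to $|\varphi'|\,dA$, which already forces a first-order area-growth bound; to capture the full second-order condition $|\varphi''/\varphi'|^{2}(1-|z|^{2})\,dA\in CM(\Delta)$ I would test against measures adapted to boundary balls of $\partial\Omega$ at each location and scale (suitably normalized harmonic-measure- or box-type measures), chosen so that the Carleson norm of the pull-back reproduces, up to constants, the box-averages of $|\varphi''/\varphi'|^{2}(1-|z|^{2})$. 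Verifying that these test measures are uniformly Carleson on $\Omega$ and that their pull-backs lower-bound the BMOA functional is the crux of this direction.

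For (Z2) the hypothesis of Ahlfors-regularity of $\partial\Omega$ enters through the single inequality $\int_{I}|\varphi'|\,|dz| = \mathcal{H}^{1}(\varphi(I)) \lesssim \mathrm{diam}\,\varphi(I)$ valid for every boundary arc $I$, i.e.\ upper regularity says image arcs have length comparable to diameter. Starting from $(\varphi^{-1})^{*}\nu(F)=\int_{\psi(F)}|\varphi'|\,d\nu$, I would decompose $\psi(F)$ into the Whitney boxes of $\Delta$ lying over the arc $I=\psi(\partial\Omega\cap D(w^{*},\ell))$, use $|\varphi'|\approx|\varphi'(c_Q)|$ on each together with the Whitney Carleson bound $\nu(Q)\lesssim\|\nu\|_{*}\,\mathrm{diam}\,Q$, reducing matters to $\sum_{Q}|\varphi'(c_Q)|\,\mathrm{diam}\,Q\approx\int_{\psi(F)}\frac{|\varphi'(z)|}{1-|z|}\,dA(z)$. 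Organizing this integral by dyadic depth and summing the boundary contributions via the Ahlfors bound $\int_{I}|\varphi'|\lesssim\ell$ gives $(\varphi^{-1})^{*}\nu(F)\lesssim\|\nu\|_{*}\,\ell$; the only real difficulty is that the radial integral $\int d\rho/(1-\rho)$ is logarithmically borderline, so the summation over scales must be arranged to collapse --- using the correct lower cut-off $1-|z|\approx\ell/|\varphi'|$ coming from $F$ being a box of size $\ell$ --- rather than estimated term by term.
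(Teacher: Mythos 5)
The paper never proves this theorem --- it is quoted as background from Zinsmeister's article \cite{Zi89} --- so there is no in-paper proof to compare against; judged on its own terms, your proposal has a fatal error in (Z2) and leaves both halves of (Z1) open at exactly their decisive steps. For (Z2): your reduction discards the measure $\nu$ too early. After covering $\psi(F)$ by Whitney boxes and replacing $\nu(Q)$ by $\|\nu\|_{*}\,\mathrm{diam}\,Q$ on every box, you are left with the claim $\int_{\psi(F)}|\varphi'(z)|(1-|z|)^{-1}\,dA(z)\lesssim \ell$. That claim is false: already for $\varphi=\mathrm{id}$ (so $\Omega=\Delta$, certainly Ahlfors-regular) it reads $\int_{F}(1-|z|)^{-1}dA\lesssim\ell$ with $F=\Delta\cap D(\zeta,\ell)$, and this integral diverges, each dyadic strip $\{1-|z|\approx 2^{-k}\ell\}\cap F$ contributing $\approx\ell$; equivalently, $\sum_{Q}\mathrm{diam}\,Q$ over all Whitney boxes meeting a Carleson box is infinite. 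The rescue you propose via a ``lower cut-off $1-|z|\approx\ell/|\varphi'|$'' does not exist: $\psi(F)$ contains points arbitrarily close to $\partial\Delta$ (since $F$ has $w^{*}\in\partial\Omega$ in its closure), and the Koebe bound $d_\varphi\le\ell$ on $\psi(F)$ only caps $|\varphi'|(1-|z|)$ from above; it does nothing against the divergence as $1-|z|\to 0$, which is where the logarithm comes from. Ironically, you identified precisely this failure mode in your (Z1) paragraph (``the per-Whitney-box Carleson bound \dots\ summed \dots\ diverges'') and then built (Z2) on it. A correct proof must keep $\nu$ as a measure and exploit that it cannot be extremal at all scales simultaneously; the natural mechanism is the Carleson embedding theorem in maximal-function form, $\int_{\psi(F)}|\varphi'|\,d\nu\le C\|\nu\|_{*}\int_{\partial\Delta}N\bigl(\chi_{\psi(F)}\varphi'\bigr)\,|d\zeta|$ with $N$ the nontangential maximal operator, after which Ahlfors regularity together with $\varphi'\in H^{1}$ (rectifiability of $\partial\Omega$) bounds the right-hand side by $C\ell$. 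The maximal function collapses all scales into a single supremum per boundary point, which is exactly what kills the logarithm; no rearrangement of a scale-by-scale Whitney sum can.

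For (Z1), both directions are outlines whose key steps are explicitly deferred (``this conversion is the technically delicate point'', ``is the crux of this direction''), so nothing is yet proved; moreover the forward outline faces a genuine obstruction, not a technicality. John--Nirenberg applied to $\log|\varphi'|$ gives \emph{area} decay of the bad sets $\{z\in S:|\varphi'(z)|<1/t\}$, but $\mu\in CM(\Omega)$ controls $\mu(\varphi(\mathrm{bad\ set}))$ through the one-dimensional Carleson content of coverings of $\varphi(\mathrm{bad\ set})$ by disks centered on $\partial\Omega$, not through area: a set of tiny area (a thin layer along the boundary, say) can have image of full Carleson content. The known way to close this is a stopping-time (corona-type) decomposition of the box $S$: stop at the maximal dyadic boxes where $\log|\varphi'|$ has moved by a fixed amount from its value at the top of $S$; John--Nirenberg then gives geometric decay of the \emph{total sidelength} of each generation of stopping boxes (a packing estimate, i.e.\ decay of Carleson content rather than of area); inside each stopping region $|\varphi'|$ is comparable to a constant, so the Carleson property of $\mu$ on $\Omega$ applies there at a single scale, and the generations sum as a geometric series. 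Your distribution-function scheme contains no counterpart of this packing estimate. For the converse, testing with area measure plus unspecified ``box-type measures'' leaves the same kind of hole: since $\varphi(S)$ need not be contained in a disk of radius comparable to $d_\varphi(z_S)$ (that containment characterizes John domains, not BMOA domains), it is not clear that measures supported near single boundary balls of $\Omega$ can detect the full box averages of $|\varphi'|$; the verification you call ``the crux'' is indeed the whole content of that direction.
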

Recall that a curve $\Gamma \subset \mathbb{C}$ is Ahlfors-regular if, $\Lambda^{1}$ denoting the Hausdorff linear measure, 
$$\exists C_1 > 0;\, \forall z \in \mathbb{C},\, \forall r > 0, \,\Lambda^{1}(\Gamma \cap D(z, r)) \leqslant C_1r.$$

It should be pointed out that if $\mu \in CM(\Omega)$ is absolutely continuous (with respect to Lebesgue measure), that is,  if  there exists a function $\lambda \in L^{1}$ such that 
$$d\mu(z) = \lambda(z)dxdy,$$
then, writing $d\nu=\varphi^*d\mu$, $$d\nu(\zeta) = \lambda\circ\varphi(\zeta) |\varphi^{'}(\zeta)|d\xi d\eta.$$ 
In 1988, Semmes \cite{Se} proved the following: 
\begin{theorem}\label{Semmesthm}
Let $\varphi$ be a quasiconformal mapping of $\Delta$ onto $\Delta$ that satisfies
\begin{description}
\item[(S1)] $\varphi$ is bi-Lipschitz continuous under the Poincar\'e metric,
\item[(S2)] $\varphi|_{\mathbb{S}}$ is a strongly quasisymmetric homeomorphism.
\end{description}
If $\lambda(z)dxdy \in CM(\Delta)$, then $\lambda \circ \varphi(\zeta) |\partial \varphi| d\xi d\eta \in CM(\Delta)$, with norm dominated by the norm of $\lambda(z) dxdy$. 
\end{theorem}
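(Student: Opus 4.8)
The plan is to verify the Carleson condition for $d\nu = \lambda\circ\varphi\,|\partial\varphi|\,d\xi\,d\eta$ one box at a time: it suffices to produce a constant $C$, depending only on the quasiconformal constant of $\varphi$, its hyperbolic bi-Lipschitz constant, and the strong quasisymmetry constant, such that $\nu(\Delta\cap D(\zeta_0,r))\le C\|\mu\|_{*}\,r$ for all $\zeta_0\in\mathbb{S}$ and $0<r<2$, where $d\mu=\lambda\,dx\,dy$. Fix such a box $Q=\Delta\cap D(\zeta_0,r)$ and write $\psi=\varphi^{-1}$, $h=\varphi|_{\mathbb{S}}$. The first move is the change of variables $w=\varphi(\zeta)$. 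Since $\|\bar\partial\varphi/\partial\varphi\|_\infty=k<1$, the Jacobian obeys $(1-k^2)|\partial\varphi|^2\le J_\varphi=|\partial\varphi|^2-|\bar\partial\varphi|^2\le|\partial\varphi|^2$, so $|\partial\varphi|\,J_\varphi^{-1}\asymp|\partial\psi|$ after composing with $\psi$, and
\[
\nu(Q)\ \asymp\ \int_{\varphi(Q)}\lambda(w)\,|\partial\psi(w)|\,du\,dv\ =:\ \int_{\varphi(Q)}\lambda\,\rho,\qquad \rho:=|\partial\psi|.
\]
Thus the problem becomes a weighted Carleson estimate for $\mu$ against the distortion weight $\rho$ over the image region.

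Next I bring in the geometry of (S1). Being bi-Lipschitz in the Poincar\'e metric, $\varphi$ extends to the boundary homeomorphism $h$ and carries $Q$ to a region comparable, with constants depending only on the bi-Lipschitz constant, to the Carleson box $Q^{*}=\Delta\cap D(h(\zeta_0),R)$ over the image arc $J=h(I)$, $I=D(\zeta_0,r)\cap\mathbb{S}$, with $R\asymp|J|$. Because strongly quasisymmetric maps are absolutely continuous on $\mathbb{S}$, the quantity we must dominate is $r\asymp|I|=|h^{-1}(J)|=\int_J(h^{-1})'\,|dw|=\int_J\rho_b$, where $\rho_b:=(h^{-1})'$ is the boundary weight. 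It therefore suffices to prove $\int_{Q^{*}}\lambda\,\rho\le C\|\mu\|_{*}\int_J\rho_b$. To this end I Whitney-decompose $Q^{*}=\bigcup_T T$ into dyadic boxes with base arcs $I_T\subset J$; the Koebe-type distortion available for hyperbolically bi-Lipschitz quasiconformal maps gives $\rho(w)\asymp(1-|\psi(w)|)/(1-|w|)\asymp\langle\rho_b\rangle_{I_T}$ for $w\in T$, where $\langle\rho_b\rangle_{I_T}=|I_T|^{-1}\int_{I_T}\rho_b$. Hence
\[
\int_{Q^{*}}\lambda\,\rho\ \asymp\ \sum_T\langle\rho_b\rangle_{I_T}\,\mu(T)=\sum_T\frac{\mu(T)}{|I_T|}\int_{I_T}\rho_b=\int_J\rho_b(x)\,G(x)\,dx,\qquad G(x):=\sum_{T:\,I_T\ni x}\frac{\mu(T)}{|I_T|},
\]
where $G$ is exactly the balayage of the Carleson measure $\mu$ onto $J$.

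The finish is where (S2) enters decisively, through two facts. On the one hand, the balayage of a Carleson measure lies in $BMO$ with $\|G\|_{BMO(J)}\lesssim\|\mu\|_{*}$, while its mean is controlled directly by the Carleson condition on the single box $Q^{*}$, namely $\langle G\rangle_J=\mu(Q^{*})/|J|\le\|\mu\|_{*}$. On the other hand, strong quasisymmetry of $h$ is equivalent to $\rho_b=(h^{-1})'$ being an $A_\infty$ weight on $\mathbb{S}$, hence satisfying a reverse H\"older inequality with some exponent $q>1$. Combining reverse H\"older for $\rho_b$ with the John--Nirenberg inequality for $G$ yields
\[
\frac{1}{\int_J\rho_b}\int_J\rho_b\,|G-\langle G\rangle_J|\le C\,\Big(\tfrac{1}{|J|}\int_J\rho_b^{\,q}\Big)^{1/q}\langle\rho_b\rangle_J^{-1}\,\|G\|_{BMO}\le C_{A_\infty}\,\|G\|_{BMO}\lesssim\|\mu\|_{*},
\]
so that the $\rho_b$-weighted average of $G$ over $J$ is $\lesssim\|\mu\|_{*}$. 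This gives $\int_J\rho_b\,G\le C\|\mu\|_{*}\int_J\rho_b$, the inequality isolated above, completing the argument.

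I expect the \emph{main obstacle} to be precisely this last coupling: the Carleson mass of $\mu$ may concentrate on a single deep Whitney box, while the boundary map can make the distortion weight $\rho$ violently non-uniform. A scale-by-scale ``fair share'' bound $\mu(T)\lesssim\|\mu\|_{*}|I_T|$ diverges once summed over generations, so one cannot estimate box by box; the proof must instead exploit the $BMO$ regularity of the balayage and the $A_\infty$ flatness of $\rho_b$ simultaneously. Establishing the two genuine uses of the hypotheses that feed this step, the equivalence ``strongly quasisymmetric $\Leftrightarrow$ $A_\infty$ boundary weight'' and the interior comparison $\rho\asymp\langle\rho_b\rangle_{I_T}$ coming from (S1), is the technical heart of the matter.
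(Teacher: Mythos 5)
Your proposal is essentially correct, but there is nothing in the paper to compare it against: Theorem \ref{Semmesthm} is quoted as background from Semmes \cite{Se} and is never proved in this text --- the paper's own results (Theorems \ref{1c} and \ref{2c}) take it as a black box, reducing their more general settings to it via conformal welding. Measured against Semmes' original argument, your route is the classical one and the steps check out: the change of variables rests on the exact identity $|\partial\psi|=\bigl(|\partial\varphi|/J_{\varphi}\bigr)\circ\psi$; condition (S1), read infinitesimally at a.e.\ point of differentiability, gives $|\partial\psi(w)|\asymp(1-|\psi(w)|)/(1-|w|)$; the standard fact that a quasiconformal self-map of $\Delta$ carries the Whitney box over $I_T$ to a region at height $\asymp|h^{-1}(I_T)|$, combined with absolute continuity of $h^{-1}$ (strong quasisymmetry is preserved under inversion because the $A_\infty$ relation between measures is symmetric --- this deserves an explicit citation), converts the interior weight into the boundary averages $\langle\rho_b\rangle_{I_T}$; and the final coupling of the dyadic balayage bounds ($\|G\|_{BMO}\lesssim\|\mu\|_{*}$ and $\langle G\rangle_J\lesssim\|\mu\|_{*}$) with the reverse H\"older inequality for $\rho_b$ and John--Nirenberg for $G$ closes the estimate. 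Your closing diagnosis is also accurate: the naive bound $\mu(T)\lesssim\|\mu\|_{*}|I_T|$ summed over generations diverges, so a cancellation mechanism of exactly this BMO/$A_\infty$ type is unavoidable. Two steps should be spelled out in a full write-up, though neither is a gap: (i) that $\varphi(Q)$ lies inside a Carleson box comparable to the one over $h(I)$ (standard from quasisymmetry of the boundary map together with the quasi-isometry property of quasiconformal self-maps of $\Delta$, and easier still under (S1)); and (ii) that the BMO and John--Nirenberg estimates for the balayage $G$ are invoked in their dyadic form on the single top interval $J$, which is all that is used. A minor alternative finish, arguably closer to Semmes' own: replace John--Nirenberg plus reverse H\"older by a Calder\'on--Zygmund stopping argument giving $|\{x\in J:\,G>kt\}|\le 2^{-k}|J|$ for $t\simeq\|\mu\|_{*}$, then use condition (1a) of the $A_\infty$ class to get geometric decay of $\rho_b\bigl(\{G>kt\}\bigr)$ and sum the layers; the two endings are interchangeable.
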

Furthermore, in \cite{TWS} it is shown that $\lambda \circ \varphi (\zeta) |\partial \varphi| d\xi d\eta \in CM_0(\Delta)$ if $\lambda(z)dxdy \in CM_0(\Delta)$.

In Section 3,  following Semmes \cite{Se} and Zinsmeister \cite{Zi89}, we continue the study of Carleson measures and their invariance under pull-back and push-forward operators. We also study the analogous statements for vanishing Carleson measures.
In Section 4, as an application of Carleson measure theory, we study further the space of chord-arc curves in the framework of the theory of BMO-Teichm\"uller spaces: in particular, we show that some quotient space of the space of chord-arc curves has a natural complex structure. Before stating these results, we recall the standard theory of Teichm\"uller spaces in Section 2.

\section{Teichm\"uller theory}
In our case, it is convenient to consider the universal Teichm\"uller space $T$, which is identified with the group $QS$ of quasisymmetric automorphisms of the unit circle 
$\mathbb{S}$ modulo post-composition of M\"obius transformations $M\ddot{o}b(\mathbb{S})$, namely, $T = M\ddot{o}b(\mathbb{S})\backslash QS$.  Here a sense preserving 
self-homeomorphism $h$ of the unit circle $\mathbb{S}$ is quasisymmetric if there exists some $M > 0$ such that 
$$
\frac{1}{M} \leqslant \frac{|h(e^{i(\theta + t)}) - h(e^{i\theta})|}{|h(e^{i\theta}) - h(e^{i(\theta - t)})|} \leqslant M
$$
for all $\theta$ and $t > 0$. Let $B(\Delta^{*})$ denote the Banach space of functions $\phi$ holomorphic in the exterior of the unit disk $\Delta^{*}$ with norm 
$$\|\phi\|_{B} = \sup_{z \in \Delta^{*}} (|z|^2 - 1)^2|\phi(z)|.$$
$B_0(\Delta^{*})$ is the subspace of $B(\Delta^{*})$ consisting of all functions $\phi$ such that $(|z|^2 - 1)^2|\phi(z)| \to 0$ as $|z| \to 1^{+}$. 

The Bers embedding $\varPhi$ of $T$ is a homeomorphism of $T$ onto a bounded domain in $B(\Delta^{*})$. The full definition of $\varPhi$ involves several steps, which we list here:\\
1. Select a representative $h$ of an element $[h]$ in $T = M\ddot{o}b(\mathbb{S})\backslash QS$ ($h$ is a quasisymmetric homeomorphism of $\mathbb{S}$),\\
2. take any quasiconformal self-map $\tilde{h}$ of $\Delta$ such that $\tilde{h}$ is an extension of $h$,\\
3. form the Beltrami coefficient $\mu$ of $\tilde{h}$, namely, $\mu(z) = \tilde{h}_{\bar{z}}/\tilde{h}_{z}$,\\
4. let $\tilde{\mu}(z) = \mu(z)$ for $z \in \Delta$ and $\tilde{\mu}(z) = 0$ for $z \in \Delta^{*}$ and solve for $f$ in the Beltrami equation 
$$f_{\bar{z}}(z) = \mu(z) f_{z}(z)$$
to obtain a quasiconformal homeomorphism of $\hat{\mathbb{C}}$ holomorphic in $\Delta^{*}$,\\
5. take the Schwarzian derivative $\mathcal{S}(f)$ of $f$ in $\Delta^{*}$.\\
The Bers embedding is the map $[h] \mapsto \varPhi(h) = \mathcal{S}(f)$ (see \cite{Le}). 

Via the Bers embedding, $T$ carries a natural complex Banach manifold structure modeled on the Banach space $B(\Delta^{*})$. Recall that the small Teichm\"uller space, $T_0 = M\ddot{o}b(\mathbb{S})\backslash Sym$, as an important subspace of the universal Teichm\"uller space, has been introduced and well studied by Gardiner and Sullivan \cite{GS} in 1992. Here the subgroup 
$Sym \subset QS$ consists of symmetric automorphisms of $\mathbb{S}$.  Recall that a quasisymmetric automorphism $h$ is said to be symmetric if 
$$
\lim_{t \to 0^+}\frac{|h(e^{i(\theta + t)}) - h(e^{i\theta})|}{|h(e^{i\theta}) - h(e^{i(\theta - t)})|} = 1
$$
uniformly for all $\theta$. The mapping $\varPhi$ described above  applied to $T_0$ has image in $B_0(\Delta^{*})$. The Bers embedding also provides a natural way to make $T_0$ into complex manifold modeled on the Banach space $B_0(\Delta^{*})$. 

Furthermore, let $\hat{\varPhi}$ induced by the Bers embedding $\varPhi$ be the map from $Sym\backslash QS$ into $B_0\backslash B$. $\hat{\varPhi}$ is defined in exactly the same way as $\varPhi$ with the exception that $\hat{\varPhi}$ is viewed as defined on the right cosets of $Sym$ in $QS$ with image in $B_0\backslash B$. 

In 1992, Gardiner and Sullivan \cite{GS} proved that the map $\hat{\varPhi}$ from $Sym\backslash QS$ into $B_0\backslash B$ is well-defined and locally one-to-one.  
We have the commutative diagram:

\begin{tikzpicture}
\matrix(m)[matrix of math nodes, row sep=5em, column sep=5em]
{ |[name=ka]| M\ddot{o}b(\mathbb{S})\backslash QS & |[name=kb]| B\\
|[name=kc]| Sym\backslash QS & |[name=kd]| B_0\backslash B\\};

\draw[->, thick] (ka) edge node[auto] {$\varPhi$} (kb)
                         (ka) edge node[auto] {$\pi$} (kc)
                         (kc) edge node[auto] {$\hat{\varPhi}$} (kd)
                         (kb) edge node[auto] {$p$} (kd)
                         ;
\end{tikzpicture}

Thus, the map $\hat{\varPhi}$ yields local coordinates for $Sym\backslash QS$ in the Banach quotient space $B_0\backslash B$, and then the coset space 
$Sym\backslash QS$ 
becomes a complex  manifold modeled in the Banach space $B_0\backslash B$. The following result \cite[Section 16.8]{GL}, which implies that $\hat{\varPhi}$ is a global coordinate,  was first observed by Jeremy Kahn. 
\begin{theorem}\label{2.1}
The map $\hat{\varPhi}$ from $Sym\backslash QS$ into $B_0\backslash B$ is an isomorphism.
\end{theorem}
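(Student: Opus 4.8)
The plan is to read the statement as a strengthening of the Gardiner--Sullivan result recalled just above: they prove that $\hat\varPhi$ is well defined and locally one-to-one, and the complex structure on $Sym\backslash QS$ is, by construction, the one that turns $\hat\varPhi$ into a local biholomorphism onto open subsets of $B_0\backslash B$. Consequently the whole content of the theorem is that $\hat\varPhi$ is \emph{globally} injective: once injectivity is established, $\hat\varPhi$ is an injective local homeomorphism, hence open and a homeomorphism onto its (open) image, and being locally biholomorphic it is biholomorphic onto that image. This is precisely the assertion that $\hat\varPhi$ is a global coordinate, which is what ``isomorphism'' means here (note that the image is the bounded set $p(\varPhi(T))$, not all of $B_0\backslash B$).

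To prove global injectivity I would translate an arbitrary pair of points to the basepoint. Suppose $\hat\varPhi(\sigma_1)=\hat\varPhi(\sigma_2)$, and choose representatives $g_1,g_2\in QS$ with $\sigma_i = Sym\cdot g_i$. By well-definedness, $\hat\varPhi(\sigma_i)=p(\varPhi(g_i))$, so the hypothesis says $\varPhi(g_1)-\varPhi(g_2)\in B_0$. Now the right translation $R_{g_2^{-1}}\colon [g]\mapsto [g\circ g_2^{-1}]$ is a biholomorphic automorphism of $T$; transported through the Bers homeomorphism it becomes a biholomorphic automorphism $r_{g_2^{-1}}$ of the domain $\varPhi(T)$ satisfying $r_{g_2^{-1}}(\varPhi(g)) = \varPhi(g\circ g_2^{-1})$. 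In particular it sends $\varPhi(g_2)$ to $\varPhi(\mathrm{id})=0$ and $\varPhi(g_1)$ to $\varPhi(w)$, where $w = g_1\circ g_2^{-1}$.

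The key point I must establish is that each right translation $r_h$ preserves $B_0$-cosets, i.e. maps any two points of $\varPhi(T)$ lying in a common coset of $B_0$ to two points again lying in a common coset; equivalently, $r_h$ descends to a map of the image in $B_0\backslash B$. Granting this for $h = g_2^{-1}$, the two points $\varPhi(g_1),\varPhi(g_2)$, which lie in a common $B_0$-coset, are carried to $\varPhi(w)$ and $0$, so $\varPhi(w)\in B_0$. I then invoke the standard characterization that $\varPhi$ maps the small Teichm\"uller space $T_0 = M\ddot{o}b(\mathbb{S})\backslash Sym$ onto $\varPhi(T)\cap B_0$; hence $\varPhi(w)\in B_0$ forces $w\in Sym$. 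But $w = g_1\circ g_2^{-1}\in Sym$ is exactly the statement $Sym\cdot g_1 = Sym\cdot g_2$, i.e. $\sigma_1=\sigma_2$, which proves injectivity and completes the argument.

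The main obstacle is clearly the descent property of the right translations; everything else is either formal or a cited fact. I expect to prove it in one of two ways. The computational route is to use the explicit transformation law for the Schwarzian derivative under the Teichm\"uller right action and to show, by a boundary estimate, that replacing $g$ by $g\circ h$ alters $\varPhi(g)$ by a term that is constant modulo $B_0$ along each coset. The conceptual route is to characterize the $B_0$-cosets intrinsically through the asymptotically conformal (symmetric) boundary behavior of the quasiconformal maps representing the points of $\varPhi(T)$, and to observe that pre-composition by the fixed quasisymmetric map $h$ preserves this asymptotic behavior. It is here that the link between the vanishing condition defining $B_0$, the symmetric subgroup $Sym$, and the vanishing Carleson measure statements recalled above is doing the real work.
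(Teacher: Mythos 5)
Your reduction is formally correct in its outer layers (global injectivity is indeed the whole content, and the appeal to $\varPhi(T_0)=\varPhi(T)\cap B_0$ is legitimate), but it is circular at the decisive point. The ``key claim'' you isolate --- that the right translation $r_h$ descends to the $B_0$-cosets, i.e.\ that $\varPhi(g_1)-\varPhi(g_2)\in B_0$ implies $\varPhi(g_1\circ h)-\varPhi(g_2\circ h)\in B_0$ --- is not a lemma on the way to the theorem: it is \emph{equivalent} to it. Granting well-definedness and $\varPhi^{-1}(B_0\cap\varPhi(T))=T_0$, the theorem says exactly that the fibers of $p\circ\varPhi$ are the $Sym$-orbits $\{\varPhi(s\circ g):s\in Sym\}$. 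Right translations trivially permute $Sym$-orbits (left and right composition commute), so the descent property follows from the theorem; conversely, your own argument derives the theorem from descent by taking $h=g_2^{-1}$. So the entire difficulty has merely been repackaged: a biholomorphic automorphism of the Bers domain $\varPhi(T)$ has no formal reason to respect the affine fibration by cosets of $B_0$, and the only reason $r_h$ does so is that those cosets coincide with $Sym$-orbits --- which is precisely what must be proven.

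Neither of your two suggested routes supplies the missing content. There is no usable explicit transformation law for $\varPhi$ under the right action ($r_h$ is not affine, and $\varPhi(g\circ h)$ is not any tractable function of $\varPhi(g)$ and $\varPhi(h)$), so the ``computational route'' has nothing to compute with; and the ``conceptual route'' presupposes an intrinsic characterization of the $B_0$-cosets by asymptotic boundary behavior, which is again the theorem itself. The genuine argument --- Kahn's, in \cite[Section 16.8]{GL}, which the paper cites for this statement and imitates for $SS\backslash LQS$ in Sections 4 and 5 --- works at the level of the conformal maps rather than the group action: from $\mathcal{S}(f_1)-\mathcal{S}(f_0)\in B_0$ one extracts smallness of the Schwarzian difference at small scales, extends $f_1\circ f_0^{-1}$ quasiconformally via the Earle--Nag reflection with dilatation small near the relevant boundary piece, pulls this back to a quasiconformal extension of $h_1\circ h_0^{-1}$ with small dilatation, and concludes scale by scale that $h_1\circ h_0^{-1}$ is symmetric (in the chord-arc setting of the paper, that $\log (h_1\circ h_0^{-1})'\in VMO$, via \cite{FKP}). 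Alternatively, Matsuzaki's decomposition \cite{Ma07}, reproduced in the paper's appendix, splits the relevant quasiconformal map into a piece with compactly supported dilatation (trivial coset) and a piece with uniformly small dilatation (handled by local injectivity). Some analytic input of this kind is indispensable, and your proposal contains none of it.
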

Let $\Omega$ be a simply connected domain in $\hat{\mathbb{C}}$ bounded by a Jordan curve $\Gamma$. $\Omega$ and the complement $\Omega^{*}$ of $\Omega \cup \Gamma$ are called complementary Jordan domains. The conformal maps $f$ and $g$ mapping $\Delta^{*}$ onto $\Omega^{*}$ and $\Omega$ onto $\Delta$ extend continuously to $\Gamma$, and thus the composition $g\circ f$ restricted to the unit circle $\mathbb{S}$ is a homeomorphism $h$. We call $h$ the welding homeomorphism corresponding to $\Gamma$.  We denote by $SQS$ the set of strongly quasisymmetric homeomorphisms $h$ on the unit circle $\mathbb{S}$ which are welding homeomorphisms corresponding to the quasicircles satisfying the Bishop-Jones condition (called BJ quasicircles) (see \cite{BJ}). In other words, $h$ is strongly quasisymmetric if and only if it is absolutely continuous with density $h^{'}$ belonging to the class of weights $A_{\infty}$ (see \cite{Ga}) introduced by Muckenhoupt, in particular, $\log h^{'} \in BMO(\mathbb{S})$ and
$$d([h_1], [h_2]) = \|\log h_2^{'} - \log h_{1}^{'}\|_{BMO},\;\;\; [h_1],[h_2] \in M\ddot{o}b(\mathbb{S})\backslash SQS$$
defines a topology in $M\ddot{o}b(\mathbb{S})\backslash SQS$.
Let $LQS$ consist of welding homeomorphisms corresponding to chord-arc curves. 
\begin{definition}
We call a curve $\Gamma$ is  a chord-arc curve (also called Lavrentiev curve) with constant $C_2$, if
$$\Lambda^{1}(\widetilde{\zeta z}) \leqslant C_2|\zeta - z|$$
for the smaller subarc $\widetilde{\zeta z}$ of $\Gamma$ joining any two finite points $z$ and $\zeta$ of $\Gamma$.  A domain bounded by a chord-arc curve with constant 
$C_2$ is called a $C_2$-chord-arc domain.
\end{definition}
Let $SS$ be the set of strongly symmetric homeomorphisms $h$ on $\mathbb{S}$ which are absolutely continuous, with $\log h^{'} \in VMO(\mathbb{S})$. In other words, $SS$ is the set of welding homeomorphisms corresponding to asymptotically smooth curves in the sense of Pommerenke \cite[p.172]{Po92}, which satisfy
$$\Lambda^{1}(\widetilde{\zeta z})/|\zeta - z| \to 1,\; as\, |\zeta - z| \to 0, \;\zeta, \, z \in \Gamma.$$
It is clear that we have the increasing scale of sets $SS \subset LQS \subset SQS$.

We denote by $\mathcal{B}(\Delta^{*})$ the Banach space of functions $\phi$ holomorphic in $\Delta^{*}$ each of which induces a Carleson measure $\lambda_{\phi}$ by 
$d\lambda_{\phi}(z) = |\phi(z)|^2(|z|^2 - 1)^3 dxdy \in CM(\Delta^{*})$. The norm on $\mathcal{B}(\Delta^{*})$ is
$$\|\phi\|_{\mathcal{B}} = \|\lambda_{\phi}\|_{*}.$$
\cite[Lemma 4.1]{SW} implies $\mathcal{B}(\Delta^{*}) \subset B(\Delta^{*})$, and the inclusion map is continuous. We denote by $\mathcal{B}_0(\Delta^{*})$ the subspace of 
$\mathcal{B}(\Delta^{*})$ consisting of all functions $\phi$ such that $\lambda_{\phi} \in CM_0(\Delta^{*})$. Then $\mathcal{B}_0(\Delta^{*}) \subset B_0(\Delta^{*})$.

We  claim that all above properties of the coset space $Sym\backslash QS$
 carry over if one view $\hat{\varPhi}$ as a mapping from the space $SS\backslash LQS$ into the Banach quotient space $\mathcal{B}_0\backslash \mathcal{B}$. For more information about the map 
$\hat{\varPhi}$, we refer the readers to \cite{Ma}.
\begin{theorem}\label{main}
The map $\hat{\varPhi}$ from $SS\backslash LQS$ onto its image in $\mathcal{B}_0\backslash \mathcal{B}$ is well-defined and globally one-to-one. Consequently, the coset space $SS\backslash LQS$ becomes a complex  manifold modeled on the Banach space $\mathcal{B}_0\backslash \mathcal{B}$.
\end{theorem}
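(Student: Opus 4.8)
The plan is to establish the two asserted properties of $\hat{\varPhi}$ — well-definedness on cosets and global injectivity — through a dictionary linking a welding homeomorphism $h$, the Beltrami coefficient of a quasiconformal extension of $h$, and the Schwarzian $\mathcal{S}(f)$ of the exterior map $f$ produced by the Bers construction. I would first record the membership criteria: for $h\in SQS$, and \emph{a fortiori} for $h\in LQS$, the Schwarzian $\mathcal{S}(f)$ lies in $\mathcal{B}(\Delta^{*})$, so that $\hat{\varPhi}$ indeed maps $SS\backslash LQS$ into $\mathcal{B}_0\backslash\mathcal{B}$; and, in the vanishing refinement, $h\in SS$ if and only if $\mathcal{S}(f)\in\mathcal{B}_0(\Delta^{*})$. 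These two equivalences are exactly where the (vanishing) Carleson measure machinery of Section 3, together with Theorem~\ref{Semmesthm} and statements (Z1), (Z2), does the work: the correspondence between the Beltrami datum on $\Delta$ and the Schwarzian on $\Delta^{*}$ is governed by transporting Carleson measures under conformal and quasiconformal changes of variable.

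To prove well-definedness on cosets I must show that $\sigma:=h_1\circ h_2^{-1}\in SS$ forces $\mathcal{S}(f_1)-\mathcal{S}(f_2)\in\mathcal{B}_0$. The obstacle is that conformal welding is not multiplicative, so the difference of Schwarzians cannot be read off directly from $\sigma$. I would instead view the passage from $[h_2]$ to $[h_1]$ as a translation in the space $M\ddot{o}b(\mathbb{S})\backslash SQS$ equipped with the metric $d$ above, and track the effect of post-composition by the strongly symmetric map $\sigma$ at the level of Beltrami coefficients. Here the chord-arc hypothesis is decisive: Ahlfors-regularity of the boundary makes both the pull-back operator $\varphi^{*}$ of (Z1) and the push-forward $(\varphi^{-1})^{*}$ of (Z2) bounded, and — in the vanishing analogues established in Section 3 — makes them preserve $CM_0$. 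This two-sided transport is what forces the Schwarzian difference into $\mathcal{B}_0$.

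For global injectivity I would reverse the argument. Assume $\mathcal{S}(f_1)-\mathcal{S}(f_2)\in\mathcal{B}_0$. Since $\mathcal{B}_0\subset B_0$, Theorem~\ref{2.1} immediately yields $\sigma\in Sym$; it remains to upgrade \emph{symmetric} to \emph{strongly symmetric}. For this I would transfer the vanishing-Carleson quality of $\mathcal{S}(f_1)-\mathcal{S}(f_2)$ back to the circle through the chord-arc changes of variable, concluding that $\log\sigma'\in VMO(\mathbb{S})$, i.e. $\sigma\in SS$. I expect this upgrade — converting $\mathcal{B}_0$-membership of the Schwarzian difference into the $VMO$ condition defining $SS$, in the presence of a genuine composition — to be the main difficulty, and it is precisely here that the chord-arc (rather than merely Bishop-Jones) assumption cannot be dispensed with.

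The complex structure is then formal. Being a global injection of $SS\backslash LQS$ into the complex Banach quotient $\mathcal{B}_0\backslash\mathcal{B}$ that depends holomorphically on its data (the holomorphy being inherited from that of the Bers embedding $\varPhi$), the map $\hat{\varPhi}$ furnishes a single global holomorphic coordinate chart; transporting the structure of $\mathcal{B}_0\backslash\mathcal{B}$ through it makes $SS\backslash LQS$ a complex Banach manifold modeled on $\mathcal{B}_0\backslash\mathcal{B}$, exactly as $\hat{\varPhi}$ does for $Sym\backslash QS$ in Theorem~\ref{2.1}.
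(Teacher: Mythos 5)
Your framework is the right one, and your opening move for injectivity --- applying Theorem \ref{2.1} to $\mathcal S(f_1)-\mathcal S(f_0)\in\mathcal B_0\subset B_0$ to conclude $\sigma=h_1\circ h_0^{-1}\in Sym$ --- is correct and is not a step the paper takes. But both halves of your proposal defer exactly the steps that carry the content. For well-definedness, the equivalences $(V_1)$--$(V_3)$ you cite are statements about a single welding; to cross the coset relation $h_1=\sigma\circ h_0$ one needs (i) an explicit auxiliary map and (ii) a chord-arc version of the Beltrami-to-Schwarzian implication. The paper constructs a quasiconformal map $\tilde s$ of the sphere, conformal in $\Omega_1^{*}$, whose dilatation in the chord-arc domain $\Omega_1$ is $\tilde\nu=\mu\circ g_0\,\overline{g_0^{'}}/g_0^{'}$, where $\mu\in\mathcal M_0(\Delta)$ encodes the $SS$-datum, and then proves Lemma \ref{schwarz}: if $|\tilde\nu|^2\rho_{\Omega_1}\,dxdy\in CM_0(\Omega_1)$ then $|\mathcal S(\tilde s)|^2\rho_{\Omega_1^{*}}^{-3}\,dxdy\in CM_0(\Omega_1^{*})$. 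That lemma is not a formal transport statement: its proof requires the Douady--Earle extension and the Semmes-type invariance Theorems \ref{1c} and \ref{2c} before the disk implication $B_1\Rightarrow B_3$ can be invoked; only afterwards do Theorems \ref{Omega} and \ref{Delta} and the Schwarzian cocycle identity finish the argument. Your phrase ``two-sided transport'' names the bookkeeping but not this lemma, which is where the chord-arc hypothesis actually enters.

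The genuine gap is in global injectivity, and you have flagged it yourself without closing it. Upgrading $\sigma\in Sym$ to $\sigma\in SS$ requires converting information about $\mathcal S(f)$, $f=f_1\circ f_0^{-1}$, which lives on $\Omega_1^{*}$, into Carleson control of the dilatation of an extension of $\sigma$ on $\Delta$. The only device that does this --- the Ahlfors--Weill extension through the Earle--Nag reflection, with the pointwise bound $|\mu(z)|\leqslant C\,|\mathcal S(f)(\gamma(z))|\,\rho_{\Omega_1^{*}}^{-2}(\gamma(z))$ --- exists only when $\sup_{\Omega_1^{*}}|\mathcal S(f)|\rho_{\Omega_1^{*}}^{-2}$ is sufficiently small, i.e.\ exactly under the local hypothesis. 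So ``transfer the vanishing-Carleson quality back to the circle through chord-arc changes of variable'' proves local injectivity and nothing more; for cosets that are far apart no such extension formula is available, and knowing $\sigma\in Sym$ supplies no Carleson estimate to compensate. The paper bridges local-to-global with an additional idea, in two variants: Kahn's localization (work over small boundary squares, where the relevant norms are small, extend there, and invoke Fefferman--Kenig--Pipher to conclude $\log\sigma^{'}\in VMO$), or Matsuzaki's factorization in the appendix, $\hat f=\hat f_0\circ\hat f_1$, where $\hat f_1$ has dilatation of small sup norm (so the local statement applies to it) and $\hat f_0$ has compactly supported dilatation (so by Lemma \ref{schwarz} it contributes only an $SS$ factor). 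Without one of these reductions your plan cannot pass from the small-norm case to the general one.
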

We hope that this complex analytic theory could find applications to some other problems in the study of chord-arc curves. In Section 4, we will give the proof of Theorem \ref{main}.

\section{On Carleson measures}


The following result by Bishop and Jones \cite{BJ} gives a geometric characterization of BMOA domain.
\begin{lemma}\label{BJ}
Let $\varphi$ be conformal on $\Delta$.  Then $\log \varphi^{'} \in BMOA(\Delta)$ if and only if the domain $\Omega = \varphi(\Delta)$ satisfies the following Bishop-Jones (BJ) condition:\\
 For any $z\in\Omega$ there exists a $k(\Omega)$-chord-arc domain $\Omega_z\subset\Omega$ containing $z$, whose diameter is uniformly comparable to $dist(z,\partial\Omega)$, and such that
 $$ \Lambda^1(\partial\Omega\cap \partial\Omega_z)\geq c(\Omega)dist(z,\partial\Omega),$$
 where $k(\Omega)>1$ and $c(\Omega)>0$ depend only on $\Omega$. 
\end{lemma}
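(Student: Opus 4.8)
The plan is to pass from the analytic condition on $\log\varphi'$ to a Carleson-measure condition and then to match that measure against the geometry of $\Omega$ one Whitney box at a time. Recall the classical characterization of $BMOA(\Delta)$: a holomorphic $g$ lies in $BMOA(\Delta)$ if and only if $|g'(z)|^2(1-|z|^2)\,dx\,dy$ is a Carleson measure. Applying this to $g=\log\varphi'$, for which $g'=\varphi''/\varphi'$, reduces the lemma to the equivalence
$$
d\mu:=\Bigl|\frac{\varphi''(z)}{\varphi'(z)}\Bigr|^2(1-|z|^2)\,dx\,dy\in CM(\Delta)\quad\Longleftrightarrow\quad \Omega=\varphi(\Delta)\ \text{satisfies the BJ condition}.
$$
This is the right reformulation because both sides are now local and additive over a Whitney decomposition of $\Delta$, and because the Koebe distortion theorem lets me translate hyperbolic data at $w\in\Delta$ into Euclidean data at $z=\varphi(w)$ through $|\varphi'(w)|(1-|w|)\asymp dist(z,\partial\Omega)$.

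For the implication from the BJ condition to $BMOA$, I would run a stopping-time (corona-type) argument over the Whitney boxes $Q$ of $\Delta$. At the image $z$ of the center of a box, the BJ condition supplies a $k(\Omega)$-chord-arc subdomain $\Omega_z$ of diameter $\asymp dist(z,\partial\Omega)$ that captures a boundary arc with $\Lambda^{1}(\partial\Omega\cap\partial\Omega_z)\geqslant c(\Omega)\,dist(z,\partial\Omega)$. Since a $k(\Omega)$-chord-arc domain is a $BMOA$ domain whose norm is bounded in terms of $k(\Omega)$ alone (chord-arc curves correspond to $A_{\infty}$ weldings, for which $\log$ of the welding derivative is in $BMO$), comparing $\varphi$ with the Riemann map of $\Omega_z$ across the shared boundary arc gives uniform control of the oscillation of $\log\varphi'$ over the part of $Q$ meeting that arc, hence a bound $\mu(Q)\lesssim \ell(Q)$ for the mass contributed by the box. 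Because the captured arcs occupy a definite fraction of $\partial\Omega$ at each scale, the standard packing estimate makes the scale-by-scale contributions summable and yields the global Carleson bound $\|\mu\|_{*}<\infty$.

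For the converse, fix $z\in\Omega$, set $w=\varphi^{-1}(z)$, and take the Whitney box $Q\ni w$ with side length $\ell(Q)\asymp 1-|w|$. By the John--Nirenberg inequality, $\log\varphi'$ deviates from its average $a_Q$ over $Q$ only on a set of small relative measure, with deviation controlled by $\|\log\varphi'\|_{BMO}$; on the good part $\varphi$ is therefore a bounded distortion of the affine map $\zeta\mapsto \varphi(w)+e^{a_Q}(\zeta-w)$, whose image is quantitatively chord-arc with diameter $\asymp |\varphi'(w)|(1-|w|)\asymp dist(z,\partial\Omega)$. This produces the candidate $\Omega_z$, and the diameter comparison is automatic. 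The main obstacle, as I see it, is the boundary-length lower bound $\Lambda^{1}(\partial\Omega\cap\partial\Omega_z)\geqslant c(\Omega)\,dist(z,\partial\Omega)$: John--Nirenberg only controls averages, so one must select the sub-box and invoke Koebe distortion carefully enough that the genuinely rectifiable part of the image boundary --- where the length integral $\int|\varphi'|$ is comparable to $e^{\operatorname{Re}a_Q}$ times arc length --- still captures a fixed fraction of $\partial\Omega$ near $z$. In other words, one must prevent the exceptional set from eating the boundary arc even though it is negligible in area; combining the quantitative John--Nirenberg estimate with the distortion theorem to control length rather than merely area is the delicate heart of this direction.
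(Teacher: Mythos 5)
Note first that the paper does not prove this lemma at all: it is quoted directly from Bishop and Jones \cite{BJ}, where it is one of the main theorems, so the only question is whether your outline would stand on its own. It would not, in either direction, although your opening reduction to the Carleson-measure characterization of $BMOA$ is correct and standard. In the direction $BMOA \Rightarrow BJ$, the single-box construction is structurally incapable of producing the required $\Omega_z$. John--Nirenberg gives a good set $E\subset Q$ of large relative area on which $|\log\varphi'-a_Q|=O(\|\log\varphi'\|_{BMO})$, but (i) $E$ is a measurable set, not a sub-box or a domain; (ii) a deviation of size $O(\|\log\varphi'\|_{BMO})$ does not make $\varphi$ a bounded distortion of an affine map --- once the imaginary part of $\log\varphi'$ oscillates by more than $\pi/2$, the image can spiral and every lower distance bound fails; and (iii) most fundamentally, for a general $BMOA$ domain the image of an arc of $\mathbb{S}$ need not be rectifiable at all, so any construction in which $\partial\Omega_z\cap\partial\Omega$ is a boundary arc can never yield $\Lambda^1(\partial\Omega\cap\partial\Omega_z)\geqslant c\,dist(z,\partial\Omega)$. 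The genuine proof builds $\Omega_z$ by a stopping-time (corona) construction over Whitney boxes, stopping where $\log\varphi'$ has oscillated by a fixed small amount; the Carleson condition forces the stopped boxes to form a packed family, and $\partial\Omega_z\cap\partial\Omega$ is then typically a Cantor-type set of positive length, positive because $|\varphi'|$ stays bounded below on the surviving positive-measure subset of $\mathbb{S}$. You flagged the length bound as ``the delicate heart''; it is, and it is exactly the part your sketch does not contain.

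In the direction $BJ \Rightarrow BMOA$, two steps are missing. Comparing $\varphi$ with the Riemann map of the chord-arc subdomain $\Omega_z$ controls $\log\varphi'$ only after one controls the conformal map from $\varphi^{-1}(\Omega_z)$ onto $\Delta$, and that requires harmonic-measure estimates (e.g. Beurling's projection theorem applied to the captured set $\partial\Omega\cap\partial\Omega_z$) which the outline does not supply. More decisively, the bound you aim for --- $\mu(Q)\lesssim\ell(Q)$ for every Whitney box $Q$ --- does not imply the Carleson condition: a Carleson box of side $\ell$ contains about $2^k$ Whitney boxes of side $2^{-k}\ell$, so per-box bounds contribute about $\ell$ at every dyadic scale and the sum over scales diverges. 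Some further mechanism is indispensable, either smallness of $\mu(Q)$ outside a Carleson-packed family of boxes or the $L^2$ identities that Bishop and Jones prove for precisely this purpose; the ``standard packing estimate'' you invoke is the missing theorem, not a known fact.
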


We next state a well-known corollary of Koebe distortion theorem (see \cite[p.9]{Po92}).  For a conformal map of $\Delta$,  define
$$d_f(z) = dist(f(z), \partial f(\Delta)) \;\; \;\;for \; z \in \Delta. $$
\begin{lemma}\label{Koebe}
If $f$ maps $\Delta$ conformally into $\mathbb{C}$ then 
$$\frac{1}{4}(1 - |z|^2)|f^{'}(z)| \leqslant d_f(z) \leqslant (1 - |z|^2)|f^{'}(z)| \;\;\;\; for\; z \in \Delta. $$
\end{lemma}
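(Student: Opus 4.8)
The plan is to reduce the estimate to the base point $z=0$ by precomposing with a disk automorphism, and then to read off the two inequalities from the Koebe one-quarter theorem and from the Schwarz lemma respectively. Fix $z\in\Delta$ and let $\phi_z(w)=(w+z)/(1+\bar z w)$ be the M\"obius automorphism of $\Delta$ with $\phi_z(0)=z$. The composition $F=f\circ\phi_z$ is again conformal on $\Delta$ with $F(\Delta)=f(\Delta)$, so that $\partial F(\Delta)=\partial f(\Delta)$ and $d_F(0)=d_f(z)$. Moreover the chain rule together with the elementary identity $\phi_z^{'}(0)=1-|z|^2$ gives $|F^{'}(0)|=(1-|z|^2)|f^{'}(z)|$. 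Thus it suffices to prove, for an arbitrary conformal $F$ on $\Delta$, the normalized two-sided bound $\tfrac14|F^{'}(0)|\leqslant d_F(0)\leqslant|F^{'}(0)|$.

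For the lower bound I would normalize once more. Setting $G(w)=(F(w)-F(0))/F^{'}(0)$ produces a univalent function with $G(0)=0$ and $G^{'}(0)=1$, to which the Koebe one-quarter theorem applies: $G(\Delta)\supset D(0,\tfrac14)$. Undoing this affine normalization, $F(\Delta)=F(0)+F^{'}(0)\,G(\Delta)\supset D\bigl(F(0),\tfrac14|F^{'}(0)|\bigr)$, and hence $d_F(0)=dist(F(0),\partial F(\Delta))\geqslant\tfrac14|F^{'}(0)|$.

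For the upper bound I would invert. Writing $\rho=d_F(0)$, the open disk $D(F(0),\rho)$ lies inside $F(\Delta)$, so the branch of $F^{-1}$ carrying $F(0)$ to $0$ is defined and univalent there with values in $\Delta$. Rescaling, $K(u)=F^{-1}(F(0)+\rho u)$ maps $\Delta$ into $\Delta$ with $K(0)=0$, so the Schwarz lemma yields $|K^{'}(0)|\leqslant 1$. Since $K^{'}(0)=\rho/F^{'}(0)$, this reads $\rho\leqslant|F^{'}(0)|$, that is, $d_F(0)\leqslant|F^{'}(0)|$. Combining the two bounds with the identities $d_F(0)=d_f(z)$ and $|F^{'}(0)|=(1-|z|^2)|f^{'}(z)|$ gives exactly the claimed inequalities.

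There is no genuine obstacle here, since the statement is the classical corollary of Koebe's theorems; the only care needed is in the bookkeeping, namely verifying $\phi_z^{'}(0)=1-|z|^2$ and checking that the constants $\tfrac14$ and $1$ survive the two affine rescalings. The entire content is carried by the Koebe one-quarter theorem (for the lower estimate) and the Schwarz lemma (for the upper estimate), and the M\"obius reduction is what transports the normalized bounds at the origin to an arbitrary point $z\in\Delta$.
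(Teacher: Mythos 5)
Your proof is correct and is essentially the standard argument that the paper invokes by citing \cite[p.9]{Po92} in lieu of a proof: a M\"obius renormalization $F=f\circ\phi_z$ to move the base point to the origin, the Koebe one-quarter theorem for the lower bound, and the Schwarz lemma applied to the rescaled inverse for the upper bound. All the bookkeeping ($\phi_z^{'}(0)=1-|z|^2$, $d_F(0)=d_f(z)$, $|F^{'}(0)|=(1-|z|^2)|f^{'}(z)|$) checks out, so nothing is missing.
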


Now we prove the analogous statement of (Z1) for vanishing Carleson measures.
\begin{theorem}\label{Delta}
Let $\varphi$ be conformal on $\Delta$ and $\Omega = \varphi(\Delta)$ be a BJ quasidisk. Then the pull-back operator
\begin{equation*}
\varphi^{*}: \; CM_0(\Omega) \to CM_0(\Delta)
\end{equation*}
is well-defined and bounded.
\end{theorem}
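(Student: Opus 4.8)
The plan is to use Zinsmeister's statement (Z1) as a black box for boundedness on the full spaces and then upgrade to the vanishing subspaces by a localization argument, exploiting the fact that membership in $CM_0$ is governed only by \emph{small} Carleson boxes.

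First I would record that, since $\Omega$ is a BJ quasidisk, Lemma~\ref{BJ} gives $\log\varphi'\in BMOA(\Delta)$, so by (Z1) the operator $\varphi^*$ is bounded from $CM(\Omega)$ to $CM(\Delta)$; let $C_0$ denote its norm. In particular $\varphi^*\mu\in CM(\Delta)$ for every $\mu\in CM_0(\Omega)$, so $\varphi^*$ is well-defined with $\|\varphi^*\mu\|_*\le C_0\|\mu\|_*$ already in hand, and only the vanishing property remains. I would also rewrite the pull-back, via the change of variables $w=\varphi(\zeta)$ applied to $\varphi^*d\mu=|\varphi'|^{-1}d(\mu\circ\varphi)$, in the form $\varphi^*\mu(B)=\int_{\varphi(B)}|(\varphi^{-1})'|\,d\mu$ valid for every Borel set $B\subset\Delta$.

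The core step is a localized estimate on a small Carleson box. I would fix $\zeta\in\mathbb S$ and small $r>0$, put $B=\Delta\cap D(\zeta,r)$ and $p=\varphi(\zeta)\in\partial\Omega$. Because $\partial\Omega$ is a Jordan (quasi)circle, $\varphi$ extends to a homeomorphism of $\overline\Delta$ onto $\overline\Omega$, hence is uniformly continuous on the compact set $\overline\Delta$; thus $\varphi(B)\subset\Omega\cap D(p,\rho)$ with $\rho=\omega_\varphi(2r)$, where $\omega_\varphi$ is the modulus of continuity of $\varphi$ and $\omega_\varphi(2r)\to0$ as $r\to0$, uniformly in $\zeta$. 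Writing $\mu_p=\mu|_{\Omega\cap D(p,\rho)}$, the measures $\mu$ and $\mu_p$ agree on subsets of $\varphi(B)$, so
\[
\varphi^*\mu(B)=\int_{\varphi(B)}|(\varphi^{-1})'|\,d\mu=\int_{\varphi(B)}|(\varphi^{-1})'|\,d\mu_p=\varphi^*\mu_p(B)\le\|\varphi^*\mu_p\|_*\,r\le C_0\|\mu_p\|_*\,r,
\]
whence $\varphi^*\mu(B)/r\le C_0\|\mu_p\|_*$. It then remains to bound $\|\mu_p\|_*$ by the vanishing modulus $\omega_\mu(t)=\sup\{\mu(\Omega\cap D(x,s))/s:x\in\partial\Omega,\ 0<s\le t\}$, which tends to $0$ as $t\to0$ precisely because $\mu\in CM_0(\Omega)$. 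Since $\mu_p$ is supported in $D(p,\rho)$ with $p\in\partial\Omega$, a short two-case argument (testing balls of radius $s\le2\rho$ against $\omega_\mu(2\rho)$, and balls of radius $s>2\rho$ against the total mass $\mu(\Omega\cap D(p,\rho))\le\omega_\mu(\rho)\,\rho$) yields $\|\mu_p\|_*\le\omega_\mu(2\rho)$. Combining,
\[
\frac{\varphi^*\mu(\Delta\cap D(\zeta,r))}{r}\le C_0\,\omega_\mu\bigl(2\,\omega_\varphi(2r)\bigr)\longrightarrow0\qquad(r\to0),
\]
uniformly in $\zeta$, which is exactly $\varphi^*\mu\in CM_0(\Delta)$.

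I expect the main subtlety to be the uniformity in $\zeta$ rather than any single inequality: the argument succeeds because the image diameter $\omega_\varphi(2r)$ of a box of radius $r$ is controlled independently of the center $\zeta$ (this is where the Jordan/quasidisk structure enters, through the continuous extension of $\varphi$ to $\overline\Delta$), and because the vanishing hypothesis on $\mu$ is itself uniform over $\partial\Omega$. A point worth isolating, and the conceptual heart of the reduction, is that only small boxes matter: the defining condition for $CM_0(\Delta)$ concerns the limit $r\to0$, so the potentially problematic large scales are absorbed once and for all into the $CM$-boundedness coming from (Z1) and never need to be revisited.
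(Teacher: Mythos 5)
Your proposal is correct and takes essentially the same approach as the paper's proof: restrict $\mu$ to a small ball centered at $\varphi(\zeta)\in\partial\Omega$, apply (Z1) (available by Lemma \ref{BJ}) to the restricted measure, and use the fact that $\varphi(\Delta\cap D(\zeta,r))$ lies in a uniformly small neighborhood of $\varphi(\zeta)$ to conclude. The only minor differences are cosmetic: the paper obtains this uniform smallness from the bi-H\"older continuity of the quasiconformal extension of $\varphi$ rather than from the uniform continuity of the Carath\'eodory extension to $\overline{\Delta}$, and your explicit two-case verification that $\|\mu_p\|_{*}\leqslant\omega_{\mu}(2\rho)$ spells out a step the paper states without proof.
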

\begin{proof}
Suppose $\varphi$ be conformal from $\Delta$ onto BJ quasidisk $\Omega$. Then $\varphi$ can be extended  to a quasiconformal homeomorphism in $\hat{\mathbb{C}}$ (still denoted by $\varphi$). Thus, $\varphi$ is bi-H\"older in $\overline{\Delta}$. That is, 
\begin{equation}\label{bi}
C_1|z_1 - z_2|^{1/\alpha} \leqslant |\varphi(z_1) - \varphi(z_2)| \leqslant C_2|z_1 - z_2|^{\alpha}, \;\;\; z_1, z_2 \in \overline{\Delta},
\end{equation}
here the constants $C_1$, $C_2$ and $\alpha$ depend only on the conformal mapping $\varphi$. Let $\mu \in CM_0(\Omega)$ and $d\nu = \varphi^{*}d\mu = |\varphi^{'}|^{-1} d(\mu\circ\varphi)$. Then for any $\epsilon > 0$, there exists a constant $r_0 > 0$ such that 
$\frac{1}{r}\mu(D(w, r)\cap \Omega) \leqslant \epsilon$ 
uniformly for $w \in \Gamma$ when $0 < r \leqslant r_0$. Denote by $\varphi^{-1}(w) = z$. It follows from (\ref{bi}) that there exists a constant $\lambda_0 > 0$ such that 
\begin{equation}\label{D}
\varphi(D(z, \lambda_0)\cap\Delta) \subset D(w, r_0)\cap\Omega.
\end{equation}
Let $d\mu^{'} = d(\mu\chi_{D(w, r_0)\cap\Omega})$ and $d\nu^{'} = \varphi^{*}d\mu^{'} = |\varphi^{'}|^{-1} d(\mu^{'}\circ\varphi)$ . 
Here $\chi_{D(w, r_0)\cap\Omega}$ denotes the characteristic function of the intersection $D(w, r_0)\cap\Omega$. Then 
$\|\mu^{'}\|_{*} \leqslant \epsilon$. We conclude from Lemma \ref{BJ} and (Z1) that 
$\nu^{'}$ is a Carleson measure with norm dominated by $\epsilon$.  By means of (\ref{D}) we have
$d\tilde{\nu} = |\varphi^{'}|^{-1} d(\mu\circ\varphi\chi_{D(z, \lambda_0)\cap\Delta}) \leqslant d\nu^{'}$. 
Then $\tilde{\nu}$ is a Carleson measure with norm dominated by $\epsilon$. Thus, $\nu  \in CM_0(\Delta)$.
\end{proof}

Let $\Omega$ be a domain bounded by the Ahlfors-regular curve $\Gamma$ with constant $C_1$. For any small constant $r > 0$,  
let $\Omega_r = \{z \in \Omega; \, dist(z, \Gamma) >  r\}$. Denote by $\partial \Omega_r = \Gamma_r$.

\begin{lemma}\label{0}
$\mu \in CM(\Omega)$ is a vanishing Carleson measure in $\Omega$ if and only if $\|\mu - \mu_r\|_{*} \to 0 $ as $r \to 0^{+}$.  Here $d\mu_r = d(\mu \chi_{\Omega_r})$ and 
$\chi_{\Omega_r}$ is the characteristic function of the domain $\Omega_r$.
\end{lemma}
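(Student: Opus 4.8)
The plan is to treat the two implications separately, the key observation being that $\mu-\mu_r$ is simply the restriction $\mu\chi_{\Omega\setminus\Omega_r}$ of $\mu$ to the collar $\Omega\setminus\Omega_r=\{z\in\Omega:\mathrm{dist}(z,\Gamma)\le r\}$.

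The implication $\|\mu-\mu_r\|_{*}\to 0 \Rightarrow \mu\in CM_0(\Omega)$ is the easy one and I would dispatch it first. Given $\epsilon>0$, pick $r$ with $\|\mu-\mu_r\|_{*}\le\epsilon$. For $w\in\Gamma$ and $0<\rho\le r$, every $z\in D(w,\rho)$ has $\mathrm{dist}(z,\Gamma)\le|z-w|<\rho\le r$, so $D(w,\rho)\cap\Omega_r=\emptyset$ and $\mu_r(D(w,\rho)\cap\Omega)=0$. Hence $\mu(D(w,\rho)\cap\Omega)=(\mu-\mu_r)(D(w,\rho)\cap\Omega)\le\epsilon\rho$ uniformly in $w$, which is exactly the vanishing condition for scales $\rho\le r$.

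For the converse I would argue as follows. Assume $\mu\in CM_0(\Omega)$, fix $\epsilon>0$, choose $\delta_0$ with $\mu(D(\zeta,s)\cap\Omega)\le\epsilon s$ for all $\zeta\in\Gamma$, $0<s\le\delta_0$, and set $\rho_0=\delta_0/2$. The goal is to show $\|\mu-\mu_r\|_{*}\le 12C_1\epsilon$ whenever $r\le\rho_0$. Fixing $w\in\Gamma$ and $0<\rho<\mathrm{diam}(\Gamma)$, the case $\rho\le\rho_0$ is immediate since $(\mu-\mu_r)(D(w,\rho)\cap\Omega)\le\mu(D(w,\rho)\cap\Omega)\le\epsilon\rho$. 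The substantive case is $\rho>\rho_0$, and here the idea is to cover the collar part of $D(w,\rho)$ by finitely many fixed-size disks centred on $\Gamma$ on which the vanishing estimate applies. Concretely, I take a maximal $\rho_0$-separated set $\{\zeta_j\}\subset\Gamma\cap D(w,\rho+r)$; a collar point $z\in D(w,\rho)$ lies within $r\le\rho_0$ of its nearest point of $\Gamma$, which in turn lies within $\rho_0$ of some $\zeta_j$, so the collar part of $D(w,\rho)$ is contained in $\bigcup_j D(\zeta_j,2\rho_0)$ and $(\mu-\mu_r)(D(w,\rho)\cap\Omega)\le\sum_j\mu(D(\zeta_j,2\rho_0)\cap\Omega)\le 2\epsilon\rho_0 N$, where $N=\#\{\zeta_j\}$ and I used $2\rho_0=\delta_0$.

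The heart of the matter, and the step I expect to be the main obstacle, is bounding $N$ by $O(\rho/\rho_0)$, i.e.\ trading the single large scale $\rho$ for $O(\rho/\rho_0)$ small scales on which the hypothesis is available. For this I would play off two competing length estimates. The disks $D(\zeta_j,\rho_0/2)$ are pairwise disjoint and sit inside $D(w,\rho+r+\rho_0/2)$; since $\Gamma$ is a Jordan curve of diameter exceeding $\rho_0$ it is not contained in any $\overline{D(\zeta_j,\rho_0/2)}$, so connectedness forces $\Lambda^{1}(\Gamma\cap D(\zeta_j,\rho_0/2))\ge\rho_0/2$. Summing and invoking Ahlfors regularity, $N(\rho_0/2)\le\Lambda^{1}(\Gamma\cap D(w,\rho+r+\rho_0/2))\le C_1(\rho+r+\rho_0/2)<3C_1\rho$, whence $N<6C_1\rho/\rho_0$ and $(\mu-\mu_r)(D(w,\rho)\cap\Omega)<12C_1\epsilon\rho$. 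Combining the two cases gives $\|\mu-\mu_r\|_{*}\le 12C_1\epsilon$ for $r\le\rho_0$, and letting $\epsilon\to0$ yields $\|\mu-\mu_r\|_{*}\to0$. Everything else is just tracking constants; the genuine input is the interplay between the connectedness lower bound and the Ahlfors upper bound on the length of $\Gamma$ inside disks.
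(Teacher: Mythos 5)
Your proof is correct, and its skeleton matches the paper's: the implication $\|\mu-\mu_r\|_{*}\to 0 \Rightarrow \mu\in CM_0(\Omega)$ is handled identically, and for the converse both arguments fix the scale given by the vanishing hypothesis, dispose of radii $\rho\le\rho_0$ trivially, and at large radii cover the collar part of $D(w,\rho)$ by $O(\rho/\rho_0)$ disks of fixed radius comparable to $\rho_0$ centered on $\Gamma$, summing the small-scale estimate over them. Where you genuinely diverge is in how these disks are selected and counted. The paper works \emph{along} the curve: it takes the subarc of $\Gamma$ through the center $z$ cut off by $\partial D(z,h)$, marks consecutive points on it at chord distance exactly $h_0$, and bounds their number by comparing the chord spacing with the arc length, $\Lambda^{1}(\widetilde{\zeta w})\le C_1 h$, arriving at the constant $(4+C_1)$. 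You instead take a maximal $\rho_0$-separated net in all of $\Gamma\cap D(w,\rho+r)$ and count it by packing: the disjoint disks $D(\zeta_j,\rho_0/2)$ each capture length at least $\rho_0/2$ of $\Gamma$ (the connectedness/radial-projection lower bound), while Ahlfors regularity caps the total available length by $3C_1\rho$. Your variant buys robustness: it never uses the orientation of the Jordan curve or the ``first/last intersection point'' bookkeeping, and---more substantively---it automatically covers collar points whose nearest boundary point lies on a piece of $\Gamma$ that re-enters $D(w,\rho)$ far from the subarc through the center; the paper's marked points live only on that subarc, so its radius-$h_0$ disks do not obviously cover such points (nor, strictly, points at distance up to $3h_0/2$ from the nearest marked point), a slip your net construction avoids. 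The price is only a slightly worse constant ($12C_1$ versus $4+C_1$), which is irrelevant for the statement.
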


\begin{proof}
Suppose $\|\mu - \mu_r\|_{*} \to 0$ as $r \to 0^{+}$. Then for any $\epsilon > 0$, there exists a constant $r_0 > 0$  such that $\|\mu - \mu_{r_0}\|_{*} < \epsilon$. We conclude that 
when $0 < h < diameter(\partial \Omega)$, for any $z \in \Gamma$, 
we have $(\mu - \mu_{r_0}) (\Omega \cap D(z, h)) < \epsilon h$ 
which implies that 
$$\mu(\Omega \cap D(z, h)) =  (\mu - \mu_{r_0}) (\Omega \cap D(z, h)) < \epsilon h$$ uniformly for $z \in \Gamma$  when  $0 < h < r_0.$ Thus,  $\mu \in CM_0(\Omega)$.

Conversely, suppose $\mu \in CM_0(\Omega)$. For any $\epsilon > 0$, there exists a constant $h_0 > 0$ such that
$\mu(\Omega \cap D(z, h)) < \epsilon h$ uniformly for $z \in \Gamma$ when $0 < h \leqslant h_0$. We choose $r_0 = \frac{1}{2}h_0$. Then when $0 < h \leqslant h_0$, 
\begin{equation}\label{small}
(\mu - \mu_{r_0})(\Omega \cap D(z, h)) \leqslant \mu(\Omega \cap D(z, h)) < \epsilon h,
\end{equation}
uniformly for $z \in \Gamma$. It remains to show that $(\mu - \mu_{r_0})(\Omega \cap D(z, h)) < \epsilon h$ uniformly for $z \in \Gamma$ as $h_0 < h < diameter(\partial\Omega)$. 
For any $z \in \Gamma$,  assume $\zeta, w$ be respectively the first and the last (for an orientation of the curve) points of $\Gamma \cap \partial D(z, h)$. Suppose  
$\zeta_0 = z$, $ \zeta_1, \zeta_2, \cdot\cdot\cdot, \zeta_{n - 1} \in \widetilde{\zeta z}$ and $\zeta_n \in \Gamma$ satisfy  $|\widetilde{\zeta_n z}| \geqslant  |\widetilde{\zeta z}|$ and 
$|\zeta_{i + 1} \zeta_{i}| = h_0, \, i = 0, 1, \cdot\cdot\cdot, n - 1$. Similarly, suppose 
$w_0 = z$, $ w_1, w_2, \cdot\cdot\cdot, w_{m - 1} \in \widetilde{zw}$ and $w_m \in \Gamma$ satisfy  $|\widetilde{zw_m}| \geqslant  |\widetilde{zw}|$ and 
$|w_{j} w_{j + 1}| = h_0, \, j = 0, 1, \cdot\cdot\cdot, m - 1$. 
Then
\begin{equation}\label{big}
\begin{split}
(\mu - \mu_{r_0})(\Omega \cap D(z, h)) & \leqslant  \sum_{i = 0}^{n} \mu(\Omega \cap D(\zeta_i, h_0)) + \sum_{j = 0}^{m} \mu(\Omega \cap D(w_j, h_0))\\
& \leqslant (n + 1) h_0 \epsilon + (m + 1) h_0 \epsilon\\
& \leqslant 4 h_0 \epsilon + |\widetilde{\zeta w}| \epsilon\\
& \leqslant 4 h_0 \epsilon +  C_1 h \epsilon\\
& < (4 + C_1)h\epsilon.\\
\end{split}
\end{equation}
Combining (\ref{small}) and (\ref{big}), we conclude that for $0 < r \leqslant r_0$, 
$$(\mu - \mu_{r})(\Omega \cap D(z, h)) \leqslant (\mu - \mu_{r_0})(\Omega \cap D(z, h)) <  (4 + C_1)h\epsilon $$
uniformly for $z \in \Gamma$ when $0 < h < diameter (\partial\Omega)$. Consequently, $\|\mu - \mu_r\|_{*} \to 0 $ as $r \to 0^{+}$.

\end{proof}


Based on Lemma \ref{0} , we show the analogous statement of (Z2) for vanishing Carleson measures is still valid.
\begin{theorem}\label{Omega}
Let $\Omega$ be an Ahlfors-regular domain and $\varphi$ map $\Delta$ conformally onto $\Omega$. 
Then the push-forward operator 
$$(\varphi^{-1})^{*}: \; CM_0(\Delta) \to CM_0(\Omega)$$ is well-defined and bounded.
\end{theorem}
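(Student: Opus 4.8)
The plan is to combine the boundedness of the push-forward operator on ordinary Carleson measures (statement (Z2), which applies because $\Omega$ is Ahlfors-regular) with the truncation characterization of vanishing Carleson measures just established in Lemma \ref{0}. The key observation is that push-forward is a positive linear operator: if $d\nu \leq d\nu'$ as measures, then $(\varphi^{-1})^{*}d\nu \leq (\varphi^{-1})^{*}d\nu'$, since the operator merely multiplies the transported measure by the nonnegative weight $|(\varphi^{-1})'|^{-1}$. This monotonicity is what lets me transport a smallness estimate rather than just a boundedness estimate.

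First I would fix $\nu \in CM_0(\Delta)$ and write $d\mu = (\varphi^{-1})^{*}d\nu$ for its push-forward; by (Z2) we already know $\mu \in CM(\Omega)$ with $\|\mu\|_{*} \leq C\|\nu\|_{*}$, so the content is entirely the vanishing property. By Lemma \ref{0} applied in $\Delta$, the hypothesis $\nu \in CM_0(\Delta)$ is equivalent to $\|\nu - \nu_s\|_{*} \to 0$ as $s \to 0^{+}$, where $\nu_s$ is the restriction of $\nu$ to $\{z \in \Delta : dist(z,\mathbb{S}) > s\}$. The goal, again by Lemma \ref{0} but now in $\Omega$, is to show $\|\mu - \mu_r\|_{*} \to 0$ as $r \to 0^{+}$, where $\mu_r$ restricts $\mu$ to $\Omega_r = \{w \in \Omega : dist(w,\Gamma) > r\}$.

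The bridge between the two truncations is the Koebe distortion estimate of Lemma \ref{Koebe}, which says that the Euclidean distance $d_\varphi(z) = dist(\varphi(z),\Gamma)$ is comparable to $(1-|z|^2)|\varphi'(z)|$, together with the fact that for a conformal map $(1-|z|^2)$ and $dist(z,\mathbb{S})$ are comparable near the boundary. The strategy is: given a target $r$, the preimage $\varphi^{-1}(\Omega \setminus \Omega_r) = \{z : d_\varphi(z) \leq r\}$ is a collar near $\mathbb{S}$, and I would show it is contained in a collar $\{z : dist(z,\mathbb{S}) \leq s(r)\}$ with $s(r) \to 0$ as $r \to 0^{+}$. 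Concretely, I would estimate $d_\mu = \mu - \mu_r$ by the push-forward of $\nu - \nu_{s(r)}$: since $(\varphi^{-1})^{*}$ is positive and $\mu - \mu_r$ lives on $\Omega \setminus \Omega_r$, whose preimage sits inside the collar where $\nu$ and $\nu_{s(r)}$ differ, the monotonicity gives $d(\mu - \mu_r) \leq (\varphi^{-1})^{*}d(\nu - \nu_{s(r)})$. Applying (Z2) to the right-hand side yields $\|\mu - \mu_r\|_{*} \leq C\|\nu - \nu_{s(r)}\|_{*}$, and the right side tends to $0$ as $r \to 0^{+}$ because $s(r) \to 0$ and $\nu \in CM_0(\Delta)$. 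Lemma \ref{0} then delivers $\mu \in CM_0(\Omega)$, and the norm bound is inherited from (Z2).

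The main obstacle is the containment of collars: verifying cleanly that $\{z : d_\varphi(z) \leq r\} \subset \{z : dist(z,\mathbb{S}) \leq s(r)\}$ with an explicit modulus $s(r) \to 0$. Here the difficulty is that $|\varphi'|$ can degenerate or blow up near $\mathbb{S}$, so the comparison $d_\varphi(z) \asymp (1-|z|^2)|\varphi'(z)|$ does not by itself force $1-|z|^2$ small when $d_\varphi(z)$ is small. I expect to resolve this using that $\varphi$ extends to a homeomorphism of $\overline{\Delta}$ onto $\overline{\Omega}$ (Ahlfors-regular Jordan curves are in particular Jordan), so the extended $\varphi$ is uniformly continuous and $d_\varphi(z) \to 0$ forces $z$ to approach $\mathbb{S}$; a compactness argument then produces the uniform modulus $s(r)$. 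Once this collar comparison is in hand, the positivity-and-monotonicity argument makes the rest routine.
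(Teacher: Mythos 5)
Your proposal is correct and follows essentially the same route as the paper's proof: both sides are reduced to the truncation characterization of Lemma \ref{0}, the collar containment $\varphi(\Delta_{s})\subset\Omega_{r}$ is obtained from Lemma \ref{Koebe} together with compactness, and the smallness is then transported through the positive operator $(\varphi^{-1})^{*}$ using (Z2). The only cosmetic difference is the direction of the parametrization: the paper fixes the disk-side truncation level $r_0$ first (coming from $\nu\in CM_0(\Delta)$) and then produces an $\Omega$-side level $r_0'$ with $\varphi(\Delta_{r_0})\subset\Omega_{r_0'}$, whereas you start from the $\Omega$-side level $r$ and build a modulus $s(r)\to 0$; the containment you worry about is handled in the paper exactly by your fallback argument.
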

\begin{remark}
With the same hypothesis the pull-back operator
$$\varphi^{*}: \; CM_0(\Omega) \to CM_0(\Delta)$$ is also well-defined and bounded; the (similar) proof is left to the reader. 
\end{remark}
\begin{remark}
For the convenience, the measure and its density will be identified in the following arguments.
\end{remark}
\begin{proof}
 Suppose $\nu \in CM_0(\Delta)$. Then for any $\epsilon > 0$, there exists 
$r_0 \in (0, 1)$ such that 
$\|\nu - \nu\chi_{\Delta_{r_{0}}}\|_{*} < \epsilon.$ 
We conclude by Lemma \ref{Koebe} that there exists a small constant $r_0^{'}$ such that $\varphi(\Delta_{r_0}) \subset \Omega_{r_0^{'}}$ which implies 
$(\nu\chi_{\Delta_{r_0}})\circ \varphi^{-1}(A) \leqslant \nu \circ (\varphi^{-1}\chi_{\Omega_{r_0^{'}}})(A)$ for any $A \subset \Omega$. 
Combining the last inequality and (Z2), we obtain 
\begin{equation*}
\begin{split}
\|(\varphi^{-1})^{*}\nu -  (\varphi^{-1})^{*}\nu\chi_{\Omega_{r^{'}}}\|_{*}
 & \leqslant \|(\varphi^{-1})^{*}\nu -  (\varphi^{-1})^{*}\nu\chi_{\Omega_{r_0^{'}}}\|_{*} 
 \leqslant \|(\varphi^{-1})^{*}\nu -  (\varphi^{-1})^{*}(\nu\chi_{\Delta_{r_0}})\|_{*} \\
& \leqslant C\|\nu - \nu\chi_{\Delta_{r_{0}}} \|_{*}
 < C\epsilon\\
\end{split}
\end{equation*}
when $0 < r^{'} \leqslant r_0^{'}$. Here the constant $C$ is the norm of the operator $(\varphi^{-1})^{*}$. Consequently, $(\varphi^{-1})^{*}\nu \in CM_0(\Omega).$
\end{proof}

\begin{definition}[see \cite{Po92} p.168]
Let $\omega(z) > 0$ be locally integrable on the unit circle $\mathbb{S}$. Set $\omega(E) = \int_E \omega(z) |dz|$. Denote by $|E|$ the Lebesgue measure of $E$. We say that 
$\omega$ satisfies the Coifman-Fefferman ($A_{\infty}$) condition if one of the following two equivalent conditions holds:
\begin{description}
\item[(1a)] there exist $C_1 > 0, C_2 > 0$ such that 
$$\frac{\omega(E)}{\omega(I)} \leqslant C_2\Big(\frac{|E|}{|I|}\Big)^{C_1} $$
for all  subarcs $I \subset \mathbb{S}$ and measurable sets $E \subset I$.\\
\item[(2a)] There exists $\beta > 0$ such that for all subarcs $I \subset \mathbb{S}$ and measurable sets $E \subset I$, 
$$\frac{\omega(E)}{\omega(I)}  < \beta \Longrightarrow \frac{|E|}{|I|} < 1/2.$$\\
\end{description}
\end{definition}
The following Lemma \cite[p.169]{Po92} states an important property of chord-arc domains:
\begin{lemma}\label{A}
Let $f$ map $\Delta$ conformally onto $\Omega$: If $\Omega$ is a chord-arc domain then $|f^{'}|$ satisfies the $(A_{\infty})$ condition.
\end{lemma}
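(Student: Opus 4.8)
The plan is to verify the Coifman--Fefferman condition (2a) for the weight $\omega=|f'|$ on $\mathbb{S}$, after first translating it into geometric language. Since $\Omega$ is chord-arc it is in particular Ahlfors-regular and a quasidisk, so $\Gamma$ is rectifiable, $f$ lies in the Smirnov class, and the nontangential boundary values of $f'$ exist a.e. with $\omega(E)=\int_E|f'|\,|dz|=\Lambda^1(f(E))$ for every measurable $E\subset\mathbb{S}$. Writing $F=f(E)$ and $J=f(I)$, the quantity $|E|/|I|$ is the ratio of harmonic measures (from $f(0)$) of $F$ and $J$, while $\omega(E)/\omega(I)=\Lambda^1(F)/\Lambda^1(J)$; thus the lemma is exactly the classical statement that on a chord-arc curve arclength is controlled by harmonic measure in the $A_\infty$ sense. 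First I would record the elementary consequence of the chord-arc condition that for every subarc $I\subset\mathbb{S}$ the image arc satisfies $\Lambda^1(f(I))\asymp \operatorname{diam} f(I)\asymp |f(a)-f(b)|$, where $a,b$ are the endpoints of $I$ (the upper bound is the chord-arc constant $C_2$, the lower bound is trivial).

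Next I would establish the one-scale comparison that is the analytic heart of the argument: for every subarc $I$, with $z_I\in\Delta$ the Whitney point lying at distance $\asymp|I|$ from the midpoint of $I$,
$$\frac{1}{|I|}\int_I|f'|\,|dz|=\frac{\Lambda^1(f(I))}{|I|}\ \asymp\ |f'(z_I)|\ \asymp\ \frac{d_f(z_I)}{|I|}.$$
The comparison $|f'(z_I)|\asymp d_f(z_I)/|I|$ is Lemma~\ref{Koebe}, and the Koebe distortion theorem also lets one replace $|f'(z_I)|$ by the supremum of $|f'|$ over the Carleson box above $I$ up to a fixed factor, which gives $\Lambda^1(f(I))\lesssim |I|\,|f'(z_I)|$. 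The reverse inequality $|I|\,|f'(z_I)|\lesssim\Lambda^1(f(I))$, equivalently $d_f(z_I)\lesssim\operatorname{diam} f(I)$, is where the geometry enters: it amounts to the interior corkscrew property of the quasidisk $\Omega$, namely that $f(z_I)$ sits at distance comparable to $\operatorname{diam} f(I)$ from $\Gamma$ and over the arc $J=f(I)$. This identifies the average of $|f'|$ over any subarc with its Whitney value.

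Finally I would upgrade these scale-invariant comparisons to the full $A_\infty$/reverse-H\"older estimate by a Calder\'on--Zygmund stopping-time decomposition of $I$. Fixing a large threshold $\lambda$ above the average $\frac{1}{|I|}\int_I|f'|$, one selects the maximal dyadic subarcs on which the average of $|f'|$ first exceeds $\lambda$; by the one-scale comparison each such stopping arc carries a definite proportion of the arclength of its image, so that the chord-arc constant forces the total Lebesgue measure of the high-derivative set to decay geometrically in the number of stopping generations. This yields condition (1a), and hence (2a), i.e. $|f'|\in A_\infty$, with constants depending only on $C_2$ and the quasidisk constants of $\Omega$. I expect the main obstacle to be the lower corkscrew bound $d_f(z_I)\lesssim\operatorname{diam} f(I)$ together with the bookkeeping that propagates the single-scale gain through the stopping tree into genuine exponential decay; this is precisely the step where the chord-arc hypothesis, and not merely Ahlfors regularity of $\Gamma$, is indispensable.
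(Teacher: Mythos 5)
The paper itself contains no proof of Lemma \ref{A}: it is quoted from Pommerenke \cite[p.169]{Po92} (it is Lavrentiev's theorem), so your sketch has to be measured against the classical argument. Your reductions at the start are fine (chord-arc implies rectifiable, hence $\omega(E)=\int_E|f'|\,|dz|=\Lambda^1(f(E))$ for measurable $E\subset\mathbb{S}$), and the one-scale comparison $\frac{1}{|I|}\int_I|f'|\asymp|f'(z_I)|\asymp d_f(z_I)/|I|$ is indeed true for chord-arc domains. But two of your steps do not hold up. First, your justification of the inequality $\Lambda^1(f(I))\lesssim |I|\,|f'(z_I)|$ is wrong: Koebe distortion does \emph{not} let you replace $|f'(z_I)|$ by $\sup_{Q_I}|f'|$ over the Carleson box, since comparability of $|f'|$ at two points requires bounded hyperbolic distance between them and the box has unbounded hyperbolic diameter; for a chord-arc domain $\log f'$ is merely BMOA, so $|f'|$ is typically unbounded on $Q_I$ while $|f'(z_I)|$ is finite. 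This inequality is exactly where the hypotheses must enter: $\Lambda^1(f(I))\leqslant C_2\operatorname{diam}f(I)$ by the chord-arc condition, and $\operatorname{diam}f(I)\lesssim d_f(z_I)$ because $f$ extends to a global quasiconformal map (chord-arc curves are quasicircles), whose quasisymmetry gives $|f(\zeta)-f(m)|\lesssim|f(z_I)-f(m)|\asymp d_f(z_I)$ for $\zeta\in I$ and $m$ the midpoint of $I$. Conversely, the bound you present as the geometric crux, $d_f(z_I)\lesssim\operatorname{diam}f(I)$, holds for \emph{every} Jordan domain by Beurling's projection theorem (harmonic measure of $I$ from $z_I$ is bounded below), so your allocation of difficulty is inverted.

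The second gap is the fatal one: the stopping-time upgrade cannot deliver $A_\infty$ as described. Conditions (1a)/(2a) quantify over arbitrary measurable $E\subset I$, and comparability of averages over \emph{arcs} --- even in both directions, together with the doubling of $|f'|\,d\theta$ it implies --- does not imply $A_\infty$: there are doubling measures on $\mathbb{S}$ (Riesz-product type) that are singular with respect to Lebesgue measure, so no argument using only arc averages can succeed. Concretely, your iteration produces no gain: if $\{I_j\}$ are the maximal stopping arcs at level $K\lambda_0$ (with $\lambda_0$ the average over $I$), then $\omega(I_j)\asymp K\lambda_0|I_j|$ by the one-scale comparison, while disjointness gives $\sum_j|I_j|\leqslant|I|/K$; multiplying, $\omega\bigl(\bigcup_j I_j\bigr)\lesssim\lambda_0|I|\asymp\omega(I)$ --- the factor $K$ cancels, so you never obtain $\omega\bigl(\bigcup_j I_j\bigr)\leqslant\frac12\omega(I)$, and Chebyshev counting of Lebesgue measure gives only the trivial weak-type decay. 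The missing ingredient is a lower bound for $\omega(E)=\Lambda^1(f(E))$ over arbitrary measurable sets, and it finishes the proof immediately, with no stopping time at all: if $|E|\geqslant\frac12|I|$ then $\omega(z_I,E,\Delta)\gtrsim1$ by the Poisson kernel estimate at the Whitney point, hence by conformal invariance and Beurling's projection theorem $\Lambda^1(f(E))\geqslant\Lambda^1_\infty(f(E))\gtrsim d_f(z_I)$, while $\omega(I)=\Lambda^1(f(I))\leqslant C_2\operatorname{diam}f(I)\lesssim d_f(z_I)$ as above; this is precisely condition (2a). It is this harmonic-measure/content estimate --- not a Calder\'on--Zygmund iteration --- that carries Lavrentiev's theorem.
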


\begin{definition}[see \cite{Se}]
Suppose $\gamma(z)$ maps $\mathbb{S}$ homeomorphically onto $\Gamma$. We say that $\gamma(z)$ is a strongly quasisymmetric homeomorphism, if
\begin{description}
\item[(1b)] it is locally absolutely continuous,
\item[(2b)] $\Gamma$ is a chord-arc curve,
\item[(3b)] $|\gamma^{'}(z)| \in A_{\infty}.$
\end{description}
\end{definition}

Now we give the invariance of Carleson measures and vanishing Carleson measures under pull-back and push-forward operators induced by quasiconformal mappings 
satisfying certain conditions which generalizes Semmes' result (see Theorem \ref{Semmesthm}).
\begin{theorem}\label{1c}
Let $\Omega$ be a chord-arc domain. If $\varphi$ is a quasiconformal mapping of $\Delta$ onto $\Omega$ that satisfies
\begin{description}
\item[(S1)] $\varphi$ is bi-Lipschitz continuous under the Poincar\'e metric,
\item[(S2)] $\varphi|_{\mathbb{S}}$ is a strongly quasisymmetric homeomorphism,
\end{description}
then $\lambda \circ \varphi^{-1}(z) |\partial \varphi^{-1}|dxdy \in CM(\Omega)$ when $\lambda(z) dxdy \in CM(\Delta)$. Furthermore,  $\lambda \circ \varphi^{-1}(z) |\partial \varphi^{-1}|dxdy \in CM_0(\Omega)$ when $\lambda(z) dxdy \in CM_0(\Delta)$.
\end{theorem}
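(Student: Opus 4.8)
The plan is to factor $\varphi$ through a Riemann map of $\Omega$ and reduce the assertion to Semmes' theorem (Theorem~\ref{Semmesthm}) for a quasiconformal self-map of $\Delta$, combined with Zinsmeister's conformal result (Z2) and Theorem~\ref{Omega} for the vanishing case. Let $g$ be a conformal map of $\Delta$ onto $\Omega$ and put $\psi=g^{-1}\circ\varphi$, so that $\psi$ is a quasiconformal self-map of $\Delta$ and $\varphi=g\circ\psi$. Since $g^{-1}$ is holomorphic, the chain rule gives $\partial\varphi^{-1}(z)=\partial\psi^{-1}(g^{-1}(z))\,(g^{-1})'(z)$ with no $\bar\partial$ term, and one checks directly that the associated push-forward densities compose: the push-forward of $\lambda\,dxdy$ under $\varphi$ is the push-forward under $g$ of the push-forward under $\psi$. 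It therefore suffices to establish two boundedness statements and compose them.

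First I would handle $g$. As $\Omega$ is chord-arc it is Ahlfors-regular, so (Z2) gives that $(g^{-1})^{*}$ is bounded from $CM(\Delta)$ to $CM(\Omega)$, while Theorem~\ref{Omega} gives the analogous boundedness from $CM_0(\Delta)$ to $CM_0(\Omega)$. This disposes of the conformal half with no further work.

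Next I would verify that $\psi$ (and, by the same token, $\psi^{-1}$) satisfies Semmes' hypotheses (S1)--(S2). For (S1) this is routine: conformal maps are isometries of the hyperbolic metric, so $g^{-1}$ is an isometry from $(\Omega,\rho_\Omega)$ onto $(\Delta,\rho_\Delta)$, and since $\varphi$ is bi-Lipschitz from $(\Delta,\rho_\Delta)$ to $(\Omega,\rho_\Omega)$ the composition $\psi=g^{-1}\circ\varphi$ is bi-Lipschitz on $(\Delta,\rho_\Delta)$; bi-Lipschitz continuity is inherited by $\psi^{-1}$. The delicate point is (S2). Here $\psi|_{\mathbb{S}}=(g|_{\mathbb{S}})^{-1}\circ\varphi|_{\mathbb{S}}$, and by Lemma~\ref{A} we have $|g'|\in A_{\infty}$, so $g|_{\mathbb{S}}$ is a strongly quasisymmetric parametrization of the chord-arc curve $\Gamma$; by hypothesis $\varphi|_{\mathbb{S}}$ is a second such parametrization. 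I expect the main obstacle to be showing that these two facts force $\psi|_{\mathbb{S}}$ to be a strongly quasisymmetric self-map of $\mathbb{S}$: writing its density as the quotient $|\varphi'|/(|g'|\circ\psi)$ and exploiting that $\Gamma$ is chord-arc (so that arc length and chordal distance along $\Gamma$ are comparable), one must check that this density again lies in $A_{\infty}(\mathbb{S})$. This is precisely the assertion that the strongly quasisymmetric homeomorphisms form a group, and it is the technical heart of the proof.

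Granting (S1)--(S2) for $\psi^{-1}$, Theorem~\ref{Semmesthm} applied to $\psi^{-1}$ shows that the push-forward under $\psi$ preserves $CM(\Delta)$ with norm controlled by a constant, and the refinement of \cite{TWS} gives the same conclusion for $CM_0(\Delta)$. Composing this with the boundedness of the push-forward under $g$ along the factorization $\varphi=g\circ\psi$ yields $\lambda\circ\varphi^{-1}|\partial\varphi^{-1}|\,dxdy\in CM(\Omega)$, with Carleson norm dominated by a constant multiple of that of $\lambda\,dxdy$, and likewise the vanishing version. Once the $A_{\infty}$ group property underlying (S2) is secured, what remains is purely the bookkeeping of composing the two operators.
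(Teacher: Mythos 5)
Your reduction coincides with the paper's own proof: the paper takes $f:\Omega\to\Delta$ conformal and works with $\psi=\varphi^{-1}\circ f^{-1}$ (which is exactly your $\psi^{-1}$, with $f=g^{-1}$), verifies (S1)--(S2) for this self-map of $\Delta$, applies Semmes' theorem, pushes forward by the conformal map via (Z2), and uses the same composition identity $\lambda\circ\varphi^{-1}|\partial\varphi^{-1}|=(\lambda\circ\psi|\partial\psi|)\circ f\,|f'|$; the vanishing case is handled, as you say, by the refinement of \cite{TWS} together with Theorem \ref{Omega}. Your (S1) argument and the final bookkeeping are correct and identical in substance to the paper's.

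The problem is the step you yourself call the technical heart: you never prove that $\psi|_{\mathbb{S}}$ is strongly quasisymmetric, you only announce that ``one must check'' it, and the method you hint at is shaky. Writing the boundary density as the quotient $|\varphi'|/(|g'|\circ\psi)$ and hoping to verify $A_\infty$ from the two factors cannot work as stated: $A_\infty$ is not stable under quotients or products, and the denominator is a composition $|g'|\circ\psi$, not a fixed weight, so nothing follows from $|\varphi'|\in A_\infty$ and $|g'|\in A_\infty$ taken separately. One must use the homeomorphism (measure-distortion) structure, and this is what the paper does, in a few lines, by chaining the two equivalent forms of the $A_\infty$ condition through arclength on $\Gamma$: hypothesis (S2) in form (2a) gives an $\epsilon>0$ with $\Lambda^1(\varphi(E))/\Lambda^1(\varphi(I))<\epsilon\Rightarrow|E|/|I|<1/2$; Lemma \ref{A} applied to the Riemann map, in form (1a), gives a $\beta>0$ with $|\psi(E)|/|\psi(I)|<\beta\Rightarrow\Lambda^1(\varphi(E))/\Lambda^1(\varphi(I))<\epsilon$ (since $g\circ\psi=\varphi$ on $\mathbb{S}$); chaining the two implications yields condition (2a) for the boundary derivative of $\psi$, hence $\psi'\in A_\infty$, and then the standard inverse-closedness of strongly quasisymmetric self-maps of $\mathbb{S}$ makes Semmes' theorem applicable to $\psi^{-1}$ as you need. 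So either reproduce this chaining argument or cite a precise result (e.g. from Semmes \cite{Se}) stating that two strongly quasisymmetric parametrizations of a chord-arc curve differ by a strongly quasisymmetric self-map of the circle; as written, your proposal is incomplete exactly at its crucial point.
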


\begin{proof}
Let $f: \Omega \to \Delta$ be conformal and the map $\psi: \Delta \to \Delta$ satisfy $\psi \circ f = \varphi^{-1}$. That is, $\psi = \varphi^{-1}\circ f^{-1}$.

Denote by $\rho_{\Omega}$ the Poincar\'e metric in $\Omega$, that is, $\rho_{\Omega}(z)|d z| = |d f(z)| /(1 - |f(z)|^2)$ for $z \in \Omega$. The condition (S1) implies there 
exists a constant $C > 1$ such that 
$$\frac{1}{C}\frac{|d z|}{1 - |z|^2} \leqslant \rho_{\Omega}(\varphi(z))|d \varphi(z)| \leqslant C \frac{|d z|}{1 - |z|^2}.$$
We conclude that 
 $$\frac{1}{C}\frac{|d z|}{1 - |z|^2} \leqslant \frac{|d (f\circ \varphi (z))|}{1 - |f \circ \varphi(z)|^2} \leqslant C \frac{|d z|}{1 - |z|^2}.$$
 Consequently, 
$$\frac{1}{C}\frac{|d z|}{1 - |z|^2} \leqslant \frac{|d \psi^{-1} (z)|}{1 - |\psi^{-1}(z)|^2} \leqslant C \frac{|d z|}{1 - |z|^2}$$
from which we obtain $\psi$ is bi-Lipschitz under the Poincar\'e metric.

Combining $(2a)$ and (S2), there exists $\epsilon > 0$ such that for all subarcs $I \subset \mathbb{S}$ and measurable sets $E \subset I$, 
$$\frac{|\varphi(E)|}{|\varphi(I)|}  < \epsilon \Longrightarrow \frac{|E|}{|I|} < 1/2.$$
On the other hand, By $(1a)$ and Lemma \ref{A}, we conclude that  for above $\epsilon > 0$, there exists $\beta > 0$ such that 
$$\frac{|f \circ \varphi(E)|}{|f \circ \varphi(I)|}  < \beta \Longrightarrow  \frac{|f^{-1}\circ (f\circ \varphi (E))|}{|f^{-1}\circ (f\circ \varphi (I))|} = 
\frac{|\varphi(E)|}{|\varphi(I)|} < \epsilon. $$
We conclude that 
$$\frac{|\psi^{-1}(E)|}{|\psi^{-1}(I)|} < \beta \Longrightarrow \frac{|E|}{|I|} < 1/2. $$
Thus by means of $(2a)$ we have $(\psi^{-1})^{'} \in A_{\infty}$ which implies $\psi^{'} \in A_{\infty}$.  Then $\psi$ is a strongly quasisymmetric homeomorphism.

By Semmes' result,  $\lambda \circ \psi(z) |\partial \psi| dxdy \in CM(\Delta)$ when $\lambda(z) dxdy \in CM(\Delta)$. It follows from (Z2) that
\begin{equation*}
\begin{split}
\lambda \circ \varphi^{-1}(z) |\partial \varphi^{-1}|dxdy & = \lambda \circ (\psi \circ f)(z)|\partial(\psi \circ f)|dxdy\\
& = (\lambda \circ \psi |\partial \psi|) \circ f(z) |f^{'}|dxdy \in CM(\Omega).
\end{split}
\end{equation*}
Furthermore, by  Theorem \ref{Omega}, $\lambda \circ \varphi^{-1}(z) |\partial \varphi^{-1}|dxdy \in CM_0(\Omega)$ when $\lambda(z) dxdy \in CM_0(\Delta)$.

\end{proof}

\begin{theorem}\label{2c}
Let $\Omega$ be a chord-arc domain. If $\varphi$ is a quasiconformal mapping of $\Delta$ onto $\Omega$ that satisfies
\begin{description}
\item[(S1)] $\varphi$ is bi-Lipschitz continuous under the Poincar\'e metric,
\item[(S2)] $\varphi|_{\mathbb{S}}$ is a strongly quasisymmetric homeomorphism,
\end{description}
then $\alpha \circ \varphi(z) |\partial \varphi| dxdy \in CM(\Delta)$ when $\alpha(z) dxdy \in CM(\Omega)$. Furthermore,  $\alpha \circ \varphi(z) |\partial \varphi| dxdy \in CM_0(\Delta)$ when $\alpha(z) dxdy \in CM_0(\Omega)$.
\end{theorem}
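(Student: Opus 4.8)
\emph{Proof proposal.} The plan is to run the argument of Theorem \ref{1c} in reverse, trading the roles of the pull-back and push-forward steps. As in that proof, I would fix the conformal map $f\colon\Omega\to\Delta$ and introduce the quasiconformal self-map $\psi=\varphi^{-1}\circ f^{-1}$ of $\Delta$, so that $\psi^{-1}=f\circ\varphi$ and $\varphi=f^{-1}\circ\psi^{-1}$. The computation already carried out in the proof of Theorem \ref{1c} shows, under (S1) and (S2), that $\psi$ is bi-Lipschitz for the Poincar\'e metric and that $(\psi^{-1})'\in A_{\infty}$; since both the bi-Lipschitz property and the $A_{\infty}$ condition are preserved under taking inverses, $\psi^{-1}$ itself satisfies (S1) and (S2). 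Thus I may apply Semmes' theorem (Theorem \ref{Semmesthm}) and its vanishing analogue to the self-map $\psi^{-1}$.

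The key bookkeeping step is a chain-rule identity. Writing $\varphi=f^{-1}\circ\psi^{-1}$ with the holomorphic factor $f^{-1}$ on the outside, one has $|\partial\varphi|=\big(|(f^{-1})'|\circ\psi^{-1}\big)\,|\partial\psi^{-1}|$, and hence
\[
\alpha\circ\varphi\,|\partial\varphi|=\big(\alpha\circ f^{-1}\,|(f^{-1})'|\big)\circ\psi^{-1}\,|\partial\psi^{-1}|.
\]
Setting $\beta=\alpha\circ f^{-1}\,|(f^{-1})'|$, I recognise $\beta$ as the pull-back $(f^{-1})^{*}\alpha$ of $\alpha$ under the conformal map $f^{-1}\colon\Delta\to\Omega$, so the right-hand side is exactly the image of $\beta$ under the Semmes operator associated to $\psi^{-1}$. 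This reduces the theorem to two applications of results already available.

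First I would control $\beta$. Since $\Omega$ is a chord-arc domain, $\partial\Omega$ is a BJ quasicircle (recall $LQS\subset SQS$), so by Lemma \ref{BJ} we have $\log(f^{-1})'\in BMOA(\Delta)$; by (Z1) the pull-back $(f^{-1})^{*}$ is bounded from $CM(\Omega)$ to $CM(\Delta)$, giving $\beta\in CM(\Delta)$ when $\alpha\in CM(\Omega)$. Applying Semmes' theorem to $\psi^{-1}$ then yields $\beta\circ\psi^{-1}\,|\partial\psi^{-1}|=\alpha\circ\varphi\,|\partial\varphi|\in CM(\Delta)$, with norm dominated by $\|\beta\|_{*}$ and hence by $\|\alpha\|_{*}$. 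For the vanishing statement I would replace (Z1) by Theorem \ref{Delta}: $\Omega$ being a chord-arc domain is in particular a BJ quasidisk, so $(f^{-1})^{*}\colon CM_0(\Omega)\to CM_0(\Delta)$ is well-defined and bounded, giving $\beta\in CM_0(\Delta)$; and I would replace Semmes' theorem by its vanishing version from \cite{TWS}, which propagates $CM_0(\Delta)$ through the self-map $\psi^{-1}$. This gives $\alpha\circ\varphi\,|\partial\varphi|\in CM_0(\Delta)$ when $\alpha\in CM_0(\Omega)$.

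\textbf{Main obstacle.} Because the transfer of (S1) and (S2) from $\varphi$ to $\psi^{-1}$ and the chord-arc $\Rightarrow$ BMOA/BJ implications are all established earlier, the genuinely delicate points are not analytic but structural: getting the chain-rule factorisation $\alpha\circ\varphi\,|\partial\varphi|=\beta\circ\psi^{-1}\,|\partial\psi^{-1}|$ exactly right (in particular keeping the Wirtinger $\partial$-derivatives and the placement of the holomorphic factor $f^{-1}$ correct), and recognising that, unlike in Theorem \ref{1c}, here one must apply the conformal transfer \emph{first} (pulling $\alpha$ back from $\Omega$ to $\Delta$ via $f^{-1}$) and the Semmes self-map estimate \emph{second}. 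I expect the vanishing case to require the extra observation that $\Omega$ is a BJ quasidisk so that Theorem \ref{Delta} is applicable, which is where the chord-arc hypothesis is genuinely used.
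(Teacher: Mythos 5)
Your proposal is correct and follows essentially the same route as the paper's own proof: the paper also factors $\varphi=f\circ\psi$ with $f\colon\Delta\to\Omega$ conformal and $\psi$ a self-map of $\Delta$ inheriting (S1)--(S2) from the proof of Theorem \ref{1c} (your $\psi^{-1}$ is exactly the paper's $\psi$), then applies (Z1) (resp.\ Theorem \ref{Delta}) to the conformal factor and Semmes' theorem (resp.\ its vanishing version from \cite{TWS}) to the self-map, via the identical chain-rule identity $\alpha\circ\varphi\,|\partial\varphi|=(\alpha\circ f\,|f'|)\circ\psi\,|\partial\psi|$. Your explicit remarks that bi-Lipschitz continuity and the $A_{\infty}$ property pass to inverses, and that chord-arc implies the BJ condition so Lemma \ref{BJ} applies, merely spell out steps the paper leaves implicit.
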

\begin{proof}
Let $f: \Delta \to \Omega$ be conformal and $\psi = f^{-1}\circ \varphi$. 

The proof of Theorem \ref{1c} implies $\psi$ is bi-Lipschitz continuous under the Poincar\'e metric and $\psi|_{\mathbb{S}}$ is a strongly quasisymmetric homeomorphism.  
By (Z1), 
$\alpha \circ f(z) |f^{'}| dxdy \in CM(\Delta)$ when $\alpha(z) dxdy \in CM(\Omega)$. On the other hand, By Semmes' result, 
\begin{equation*}
\begin{split}
\alpha \circ \varphi(z) |\partial \varphi|dxdy & = \alpha \circ (f \circ \psi)(z)|\partial(f \circ \psi)|dxdy\\
& = (\alpha \circ f |f^{'}|) \circ \psi(z) |\partial \psi|dxdy \in CM(\Delta).\\
\end{split}
\end{equation*}
Furthermore, by  Theorem \ref{Delta},  $\alpha \circ \varphi(z) |\partial \varphi| dxdy \in CM_0(\Delta)$ when $\alpha(z) dxdy \in CM_0(\Omega)$.
\end{proof}

\begin{remark}
In Theorem \ref{1c} and Theorem \ref{2c}, the unit disk $\Delta$ can be replaced by the chord-arc domain. We omit the detail here.
\end{remark}

\section{The quotient space of chord-arc curves}
Let $M(\Delta)$ denote the open unit ball of the Banach space $L^{\infty}(\Delta)$ of essentially bounded measurable functions on $\Delta$. 
We denote by $\mathcal{L}(\Delta)$ the Banach space of all essentially bounded measurable functions $\mu$ on $\Delta$ each of which induces a Carleson measure 
$\lambda_{\mu} \in CM(\Delta)$ by $d\lambda_{\mu}(z) = |\mu(z)|^2/(1 - |z|^2)dxdy$. The norm on $\mathcal{L}(\Delta)$ is defined as
\begin{equation*}
\|\mu\|_{c} = \|\mu\|_{\infty} + \|\lambda_{\mu}\|_{*}.
\end{equation*}
 $\mathcal{L}_0(\Delta)$ is the subspace of $\mathcal{L}(\Delta)$ consisting of all elements $\mu$ such that $\lambda_{\mu} \in CM_0(\Delta)$. 
Set $\mathcal{M}(\Delta) =  M(\Delta) \cap \mathcal{L}(\Delta)$ and $\mathcal{M}_0(\Delta) =  M(\Delta) \cap \mathcal{L}_0(\Delta)$.

Let $\Omega$ be a simply connected domain in $\hat{\mathbb{C}}$ bounded by a quasicircle $\Gamma$. The conformal maps $f$ and $g$ mapping $\Delta^{*}$ onto 
$\Omega^{*}$ and $\Omega$ onto $\Delta$ extend continuously to $\Gamma$ and $h = g\circ f$ is the quasisymmetric welding homeomorphism. By results of Astala-Zinsmeister \cite{AZ}, Bishop-Jones \cite{BJ} and Fefferman-Kenig-Pipher \cite{FKP}, the following three conditions are equivalent: $(B_1) f$ has a quasiconformal extension to $\hat{\mathbb{C}}$ whose complex dilatation $\mu \in \mathcal{M}(\Delta)$ (for instance the Douady-Earle extension, see \cite{CZ}); $(B_2) h \in SQS;$ $(B_3) \mathcal{S}(f) \in \mathcal{B}(\Delta^{*}).$
Furthermore, Pommerenke \cite{Po78} and Shen-Wei \cite{SW} obtained: The following statements are equivalent: $(V_1) f$ has a quasiconformal extension to $\hat{\mathbb{C}}$ whose complex dilatation $\mu \in \mathcal{M}_0(\Delta)$; $(V_2) h \in SS;$ $(V_3) \mathcal{S}(f) \in \mathcal{B}_0(\Delta^{*}).$ We first note that the   implication relations 
$B_1 \Rightarrow B_3$ and $V_1 \Rightarrow V_3$ are still valid for the chord-arc domain.

\begin{lemma}\label{schwarz}
Let $\Omega$ be a chord-arc domain. Let $s$ be a quasiconformal mapping of  $\hat{\mathbb{C}}$ with complex dilatation 
equal to $\nu$ in $\Omega$, conformal in $\Omega^{*}$. If $|\nu|^2\rho_{\Omega}(z)dxdy \in CM(\Omega)$, then 
$|\mathcal{S}(s)|^2 \rho_{\Omega^{*}}^{-3}(z)dxdy \in CM(\Omega^{*}).$ While $|\mathcal{S}(s)|^2 \rho_{\Omega^{*}}^{-3}(z)dxdy \in CM_0(\Omega^{*}) $ if $|\nu|^2\rho_{\Omega}(z)dxdy \in CM_0(\Omega)$.
\end{lemma}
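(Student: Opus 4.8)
The plan is to reduce the statement to the already-known disk implications $B_1\Rightarrow B_3$ and $V_1\Rightarrow V_3$ by uniformizing. Let $F:\Delta^{*}\to\Omega^{*}$ and $G:\Delta\to\Omega$ be the two conformal maps and $h=G^{-1}\circ F|_{\mathbb{S}}$ the welding, which lies in $LQS\subset SQS$ because $\Gamma$ is chord-arc. I would assemble a single quasiconformal self-map of $\hat{\mathbb{C}}$ out of $F$ and a quasiconformal extension of $h$, compose it with $s$ so that the result is conformal in $\Delta^{*}$, read off its Schwarzian there, and transport all Carleson conditions between the disk and $\Omega$, $\Omega^{*}$ using the transfer theorems of this section ((Z1), (Z2), Theorems \ref{Delta}, \ref{Omega}, \ref{1c}, \ref{2c}).

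\textbf{Non-vanishing case.} Fix a global quasiconformal $\hat{F}$ equal to $F$ on $\Delta^{*}$ and restricting on $\Delta$ to $\chi:=G\circ\tilde h$, where $\tilde h$ is the Douady--Earle extension of $h$; since $h\in SQS$ one may take $\mu_{\tilde h}\in\mathcal{M}(\Delta)$ with $\tilde h$ bi-Lipschitz in the Poincar\'e metric, so that $\chi$ satisfies (S1) and (S2) (note $\chi|_{\mathbb{S}}=F|_{\mathbb{S}}$ is strongly quasisymmetric since $|F'|\in A_{\infty}$ by Lemma \ref{A}). Set $\tilde{s}=s\circ\hat{F}$; it is conformal in $\Delta^{*}$, where $\mathcal{S}(\tilde s)=\mathcal{S}(s\circ F)$. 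The composition formula for Beltrami coefficients gives $|\mu_{\tilde s}|^{2}\lesssim|\mu_{\chi}|^{2}+|\nu\circ\chi|^{2}$ on $\Delta$: the first term is Carleson because $\mu_{\chi}=\mu_{\tilde h}\in\mathcal{M}(\Delta)$, and for the second, (S1) gives $\rho_{\Omega}(\chi)|\partial\chi|\asymp(1-|z|^{2})^{-1}$, whence $|\nu\circ\chi|^{2}(1-|z|^{2})^{-1}\asymp|\nu\circ\chi|^{2}\rho_{\Omega}(\chi)|\partial\chi|$, which is Carleson by Theorem \ref{2c} applied to $\alpha=|\nu|^{2}\rho_{\Omega}$. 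Thus $\mu_{\tilde s}\in\mathcal{M}(\Delta)$, and $B_1\Rightarrow B_3$ yields $\mathcal{S}(s\circ F)\in\mathcal{B}(\Delta^{*})$. Subtracting $\mathcal{S}(F)\in\mathcal{B}(\Delta^{*})$ (which holds because $\Gamma$ is a chord-arc, hence BJ, curve) leaves $(\mathcal{S}(s)\circ F)(F')^{2}\in\mathcal{B}(\Delta^{*})$. Since $|(\mathcal{S}(s)\circ F)(F')^{2}|^{2}(|z|^{2}-1)^{3}dxdy$ is exactly the pull-back under $F$ of $|\mathcal{S}(s)|^{2}\rho_{\Omega^{*}}^{-3}dxdy$, the $\Omega^{*}$-analog of (Z2) (valid since $\Gamma$ is Ahlfors-regular) pushes it forward, giving $|\mathcal{S}(s)|^{2}\rho_{\Omega^{*}}^{-3}dxdy\in CM(\Omega^{*})$.

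\textbf{Vanishing case and the main obstacle.} Here lies the real difficulty: the welding of a chord-arc curve is only strongly quasisymmetric, so $\mu_{\tilde h}\in\mathcal{M}(\Delta)\setminus\mathcal{M}_{0}(\Delta)$ in general, hence $\mu_{\tilde s}$ is never in $\mathcal{M}_{0}(\Delta)$ and one cannot feed $V_1\Rightarrow V_3$ into the argument above (both $\mathcal{S}(s\circ F)$ and $\mathcal{S}(F)$ sit in $\mathcal{B}\setminus\mathcal{B}_{0}$, and only their difference should be in $\mathcal{B}_{0}$). I would instead argue by truncation. Put $\nu_{r}=\nu\chi_{\{\,\mathrm{dist}(\cdot,\Gamma)\ge r\}}$; by Lemma \ref{0} applied to $|\nu|^{2}\rho_{\Omega}\in CM_{0}(\Omega)$ one gets $\||\nu-\nu_{r}|^{2}\rho_{\Omega}\|_{*}\to0$. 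For each fixed $r$ the map $s_{r}$ (dilatation $\nu_{r}$, conformal in $\Omega^{*}$) is conformal in a whole neighbourhood of $\Gamma$, so $\mathcal{S}(s_{r})$ is bounded there; since $\rho_{\Omega^{*}}^{-3}\asymp\mathrm{dist}(\cdot,\Gamma)^{3}$ near $\Gamma$, a direct estimate on the Ahlfors-regular $\Gamma$ shows $|\mathcal{S}(s_{r})|^{2}\rho_{\Omega^{*}}^{-3}\in CM_{0}(\Omega^{*})$. Granting the comparison bound $\||\mathcal{S}(s)-\mathcal{S}(s_{r})|^{2}\rho_{\Omega^{*}}^{-3}\|_{*}\lesssim\||\nu-\nu_{r}|^{2}\rho_{\Omega}\|_{*}\to0$, the target measure is a Carleson-norm limit of elements of the closed subspace $CM_{0}(\Omega^{*})$, hence lies in $CM_{0}(\Omega^{*})$.

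\textbf{The crux.} The comparison bound is the hard part. I would obtain it by factoring $s=q_{r}\circ s_{r}$ with $q_{r}=s\circ s_{r}^{-1}$ conformal on $s_{r}(\Omega^{*})$ and dilatation comparable to $|\nu-\nu_{r}|$ on $s_{r}(\Omega)$, so that the chain rule gives $\mathcal{S}(s)-\mathcal{S}(s_{r})=(\mathcal{S}(q_{r})\circ s_{r})(s_{r}')^{2}$ on $\Omega^{*}$; applying the non-vanishing estimate of the second paragraph to $q_{r}$ and transporting through the conformal $s_{r}|_{\Omega^{*}}$ yields the claim. The genuine obstacle is that this needs $B_1\Rightarrow B_3$ in quantitative form --- a bound on the Schwarzian in $\mathcal{B}$ controlled by the \emph{Carleson seminorm} (not the sup norm) of the Beltrami coefficient, uniformly in $r$ as the auxiliary domains $s_{r}(\Omega^{*})$ vary --- which is precisely the content of the singular-integral analysis behind the derivative of the Bers embedding (Shen--Wei \cite{SW}). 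An equivalent route I would keep in reserve is to write $(\mathcal{S}(s)\circ F)(F')^{2}$ as the integral over $t\in[0,1]$ of the derivative of the Bers embedding along the path of Beltrami coefficients of $s^{t\nu}\circ\hat{F}$, using that this derivative carries vanishing-Carleson directions into $\mathcal{B}_{0}$.
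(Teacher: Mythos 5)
Your first (non-vanishing) half is, modulo notation, the paper's own proof: the paper also forms the auxiliary extension $\tilde f=g\circ E(h)$ (your $\hat F$ with $\chi=G\circ\tilde h$), bounds $|\mu(s\circ\tilde f)|^2/(1-|z|^2)$ by the composition formula, transfers $|\nu|^2\rho_{\Omega}$ to the disk with Theorem \ref{2c}, invokes $B_1\Rightarrow B_3$, subtracts $\mathcal{S}(f)$ via the cocycle identity, and pushes forward with Theorem \ref{1c}. That half is correct and identical in approach.

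For the vanishing half, your diagnosis of the obstruction is correct, and it in fact applies to the paper itself, which disposes of this half with the single sentence ``Examining the above proof, we may obtain\dots''. Since $\mu(\tilde f)\in\mathcal{M}(\Delta)\setminus\mathcal{M}_0(\Delta)$ for a general chord-arc curve, both $\mathcal{S}(s\circ f)$ and $\mathcal{S}(f)$ lie only in $\mathcal{B}(\Delta^{*})$, and the final step $|\mathcal{S}(s)\circ f|^2|f'|^4(|z|^2-1)^3\leqslant 4|\mathcal{S}(s\circ f)|^2(|z|^2-1)^3+4|\mathcal{S}(f)|^2(|z|^2-1)^3$ bounds a difference by a sum, so it cannot detect the cancellation needed for $CM_0$; ``examining the above proof'' therefore does not suffice. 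However, your substitute does not close the gap either: everything hinges on the comparison bound $\||\mathcal{S}(s)-\mathcal{S}(s_r)|^2\rho_{\Omega^{*}}^{-3}\|_{*}\lesssim\||\nu-\nu_r|^2\rho_{\Omega}\|_{*}$ with a constant independent of $r$, which you only ``grant''. It does not follow from the non-vanishing statement, for two concrete reasons. First, that statement (in your proof and the paper's) controls the output Carleson norm by the input Carleson norm \emph{plus} additive geometric terms --- the Carleson norms of $|\mathcal{S}(\cdot)|^2(|z|^2-1)^3$ for the Riemann map of the ambient domain and of the Douady--Earle dilatation of its welding --- and for the domains $s_r(\Omega^{*})$ these terms have no reason to be small when $\||\nu-\nu_r|^2\rho_{\Omega}\|_{*}$ is. Second, all constants depend on the chord-arc constant of $s_r(\Gamma)$, which you can only bound through the distortion of $s_r$ on a collar of width $r$ (a quasiconformal image of a chord-arc curve need not be chord-arc, so conformality of $s_r$ near $\Gamma$ is essential here), and this control degenerates as $r\to 0$. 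So what is genuinely needed is a Carleson-seminorm-quantitative form of $B_1\Rightarrow B_3$, with constants depending only on $\|\nu\|_{\infty}$ and the chord-arc constant and uniform over the varying domains, or the holomorphy/derivative-of-the-Bers-embedding argument you keep in reserve; neither is proved in your proposal nor available from the results stated in the paper, so the second assertion of the lemma remains unproved as written. Your identification of why the naive adaptation fails is nonetheless valuable, since it exposes a real gap in the paper's own one-line treatment.
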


\begin{proof}
Let $g: \Delta \to \Omega $ and $f: \Delta^{*} \to \Omega^{*}$ be the two conformal mappings of a chord-arc domain. Let $h = g^{-1}\circ f$ be conformal welding with respect to $\Gamma = \partial \Omega$. Then $h \in LQS$. 

Let $\varphi = E(h)$ be the Douady-Earle extension of $h$. Douady-Earle \cite{DE} says $\varphi$ is bi-Lipschitz  under the Poincar\'e metric. It follows from Cui-Zinsmeister \cite{CZ} that $|\mu(z)|^2/(1 - |z|^2)dxdy \in CM(\Delta)$, where $\mu = \overline{\partial}\varphi/\partial \varphi$. Denote by $\tilde{f} = g \circ \varphi$. Then $\tilde{f}$ is a quasiconformal extension of $f$ to the unit disk $\Delta$ satisfying that  $\tilde{f}$ is bi-Lipschitz continuous under the Poincar\'e metric and 
$$\frac{|\mu(\tilde{f})|^2}{1 - |z|^2}dxdy = \frac{|\mu(z)|^2}{1 - |z|^2}dxdy \in CM(\Delta).$$ 
Since $\Omega$ is a chord-arc domain, by Lemma \ref{A} $g|_{\mathbb{S}}$ is a strongly quasisymmetric homeomorphism. We also have $\varphi|_{\mathbb{S}} = h \in LQS \subset SQS$, from which we deduce that $\tilde{f}|_{\mathbb{S}}$ is a strongly quasisymmetric homeomorphism by means of the similar proof as Theorem \ref{1c}.

It follows from the formula for the dilatation of a composition, 
\begin{equation*}
\mu(s \circ \tilde{f}) = \frac{\mu(\tilde{f}) + (\nu \circ \tilde{f})\tau}{1 + \overline{\mu(\tilde{f})}(\nu \circ \tilde{f})\tau}, \;\;\;\; \tau = \frac{\overline{(\tilde{f})_z}}{(\tilde{f})_z}
\end{equation*}
that
\begin{equation*}
\frac{|\mu(s \circ \tilde{f})|^2}{1 - |z|^2} \leqslant C_1 \big(\frac{|\mu(\tilde{f})|^2}{1 - |z|^2} + \frac{|\nu \circ \tilde{f}|^2}{1 - |z|^2}\big).
\end{equation*}
Noting that $\tilde{f} = g \circ \varphi$,  we see that 
\begin{equation*}
\begin{split}
\frac{|\nu \circ \tilde{f}|^2}{1 - |z|^2} & \leqslant C_2 \frac{|(\nu \circ g) \circ \varphi|^2}{1 - |\varphi(z)|^2} |\partial \varphi | \\
& = C_2 \big(\frac{|\nu \circ g|^2}{1 - |z|^2}\big) \circ \varphi |\partial \varphi|\\
& = C_2 (|\nu \circ g|^2\rho_{\Omega}\circ g |g^{'}|)\circ \varphi |\partial \varphi|\\
& = C_2 ((|\nu|^2\rho_{\Omega}) \circ g |g^{'}|)\circ \varphi |\partial \varphi|\\
& = C_2 (|\nu|^2\rho_{\Omega})\circ \tilde{f} |\partial \tilde{f}|,\\
\end{split}
\end{equation*}
which implies that $|\nu \circ \tilde{f}|^2/(1 - |z|^2)dxdy \in CM(\Delta)$ by Theorem \ref{2c}.  Thus, $|\mu(s \circ \tilde{f})|^2/(1 - |z|^2)dxdy \in CM(\Delta).$ 
We conclude by $B_1 \Rightarrow B_3$ that 
$$|\mathcal{S}(s \circ f)|^2 (|z|^2 - 1)^3dxdy \in CM(\Delta^{*}). $$
Since $\Omega^{*}$ is a chord-arc domain, we also have
$$|\mathcal{S}( f)|^2 (|z|^2 - 1)^3dxdy \in CM(\Delta^{*}).$$
By simple computation, 
\begin{equation*}
\begin{split}
(|\mathcal{S}(s)|^2 \rho_{\Omega^{*}}^{-3}) \circ f |f^{'}| & = |\mathcal{S}(s) \circ f|^2 |f^{'}|^4 (\rho_{\Omega^{*}} \circ f |f^{'}|)^{-3}\\
& = |\mathcal{S}(s) \circ f  |^2|f^{'}|^4 (|z|^2 - 1)^3 \\
& \leqslant 4 |\mathcal{S}(s \circ f)|^2 (|z|^2 - 1)^3 + 4|\mathcal{S}(f)|^2 (|z|^2 - 1)^3.\\
\end{split}
\end{equation*}
Thus, $(|\mathcal{S}(s)|^2 \rho_{\Omega^{*}}^{-3}) \circ f(z) |f^{'}|dxdy  \in CM(\Delta^{*})$. Now $|\mathcal{S}(s)|^2 \rho_{\Omega^{*}}^{-3}(z)dxdy \in CM(\Omega^{*})$ 
follows from Theorem \ref{1c}.

Examining the above proof, we may obtain that $|\mathcal{S}(s)|^2 \rho_{\Omega^{*}}^{-3}(z)dxdy \in CM_0(\Omega^{*})$ if  $|\nu|^2\rho_{\Omega}(z)dxdy \in CM_0(\Omega)$.
\end{proof}

Shen-Wei \cite{SW} proved $\varPhi$ from $M\ddot{o}b(\mathbb{S})\backslash SQS$ onto its image in $\mathcal{B}(\Delta^{*})$ is a homeomorphism if we think of 
$M\ddot{o}b(\mathbb{S})\backslash SQS$ as having the BMO topology and $\mathcal{B}(\Delta^{*})$ as having the topology induced by the Carleson norm. 
In \cite{Zi85} it is proved that $M\ddot{o}b(\mathbb{S})\backslash LQS$ is an open subset of $M\ddot{o}b(\mathbb{S})\backslash SQS$.  
We conclude  that $\varPhi$ from $M\ddot{o}b(\mathbb{S})\backslash LQS$ onto its image in 
$\mathcal{B}(\Delta^{*})$ is also a homeomorphism. Thus, the map $\varPhi$ yields a global coordinate for $M\ddot{o}b(\mathbb{S})\backslash LQS$ in the Banach space $\mathcal{B}(\Delta^{*})$, and then the coset space $M\ddot{o}b(\mathbb{S})\backslash LQS$ becomes a complex manifold modeled in the Banach space $\mathcal{B}(\Delta^{*})$.

\bigskip

In the rest part of this paper, we prove $\hat{\varPhi}$ described in Section 2 from $SS\backslash LQS$ onto its image in
$\mathcal{B}_0\backslash \mathcal{B}$ is well-defined and globally one-to-one, and then $SS\backslash LQS$ is a complex manifold. 

In the first step, we claim $\hat{\varPhi}$ is well-defined. Since an element $h \in LQS$ is determined by the complex dilatation 
$\mu \in \mathcal{M}(\Delta)$ of the quasiconformal extension of $h$ to $\Delta$, we can write $\varPhi(\mu)$ instead of $\varPhi(h)$. 

Let $\partial \Omega_1 = \partial \Omega_1^{*} = \Gamma$ be a chord-arc curve passing through three points $1, \, i$ and $-1$. 
Let $f_0$ be a conformal map of $\Delta^{*}$ to $\Omega_1^{*}$, $g_0$ be a conformal map of $\Omega_1$ to $\Delta$, and $g_0 \circ f_0$ restricted to 
$\mathbb{S}$ be equal to $h_0$. Suppose all three maps are normalized to fix $1, \, i$ and $-1$. 
Let $f^{\lambda} = DE(h_0)$ be the Douady-Earle extension of $h_0$ with Beltrami coefficient equal to $\lambda$ in $\Delta$. Denote by 
$f_{\lambda} = g_0^{-1}\circ f^{\lambda}$ in $\Delta$, and $f_{\lambda} = f_0$ in $\Delta^{*}$. Then $f_{\lambda}$ is a quasiconformal homeomorphism of 
$\hat{\mathbb{C}}$ with Beltrami coefficient equal to $\lambda$ in $\Delta$ and  equal to $0$ in $\Delta^{*}$.

In order to prove the claim, we just need to show: for any $\mu \in \mathcal{M}_0(\Delta)$, $\Phi(\mu * \lambda) - \Phi(\lambda) \in \mathcal{B}_0(\Delta^{*})$.

Let $s^{\mu}$ be a quasiconformal mapping of $\Delta$ onto $\Delta$ with Beltrami coefficient $\mu$. Let $\tilde{s}$ be the quasiconformal homeomorphism 
of $\hat{\mathbb{C}}$ with the property that $\tilde{s}\circ f_{\lambda}$ has the same Beltrami coefficient as $s^{\mu}\circ f^{\lambda}$ in $\Delta$ and 
$\tilde{s}\circ f_{\lambda}$ has Beltrami coefficient identically equal to zero in $\Delta^{*}$. If $r$ is the conformal mapping of 
$\tilde{s}\circ f_{\lambda}(\Delta)$ onto $\Delta$, then 
$$s^{\mu}\circ f^{\lambda} = r \circ \tilde{s} \circ f_{\lambda} =  r \circ \tilde{s} \circ g_0^{-1} \circ g_0 \circ f_{\lambda}  =    r \circ \tilde{s} \circ g_0^{-1} \circ f^{\lambda}.$$
Cancelling $f^{\lambda}$ from the right and left side of this equation, we find that 
$$s^{\mu}\circ g_0 = r \circ \tilde{s}.$$
Thus, we conclude that the Beltrami coefficient of $\tilde{s}$ is 
$$\tilde{\nu}(z) = \mu \circ g_0(z) \frac{\overline{g_0^{'}(z)}}{g_0^{'}(z)}.$$

Now, suppose $\mu \in \mathcal{M}_0(\Delta)$. Then 
$$d\lambda_{\mu}(z) = \frac{|\mu(z)|^2}{1 - |z|^2}dxdy \in CM_0(\Delta).$$
Since $\Omega_1$ is a chord-arc domain, we conclude by Theorem \ref{Omega} that
$$|\tilde{\nu}(z)|^2\rho_{\Omega_1}(z)dxdy = \frac{|\mu \circ g_0(z)|^2}{1 - |g_0(z)|^2}|g_0^{'}(z)|dxdy = \lambda_{\mu}\circ g_0(z) |g_0^{'}(z)|dxdy \in CM_0(\Omega_1).$$
It follows from Lemma \ref{schwarz} that 
$$|\mathcal{S}(\tilde{s})|^2\rho_{\Omega_1^{*}}^{-3}(z)dxdy \in CM_0(\Omega_1^{*}).$$
By means of Theorem \ref{Delta} we have
$$|\mathcal{S}(\tilde{s})\circ f_{\lambda}(z)|^2\rho_{\Omega_1^{*}}^{-3}(f_{\lambda}(z))|f_{\lambda}^{'}|dxdy \in CM_0(\Delta^{*}).$$
The cocycle identity for the Schwarzian derivative implies 
$$\Phi(\mu * \lambda) - \Phi(\lambda) = \mathcal{S}(\tilde{s}\circ f_{\lambda}) - \mathcal{S}(f_{\lambda}) = 
\mathcal{S}(\tilde{s})\circ f_{\lambda}(f_{\lambda}^{'})^2.$$
On the other hand, $$|\mathcal{S}(\tilde{s})\circ f_{\lambda}(f_{\lambda}^{'})^2|^2 (|z|^2 - 1)^3dxdy = 
(|\mathcal{S}(\tilde{s})|^2\rho_{\Omega_1^{*}}^{-3})\circ f_{\lambda} |f_{\lambda}^{'}|dxdy \in CM_0(\Delta^{*}).$$
Consequently, $\Phi(\mu * \lambda) - \Phi(\lambda) \in \mathcal{B}_0(\Delta^{*})$.

Thus, the claim is proved. We have the commutative diagram:

\begin{tikzpicture}
\matrix(m)[matrix of math nodes, row sep=5em, column sep=5em]
{ |[name=ka]| M\ddot{o}b(\mathbb{S})\backslash LQS & |[name=kb]| \mathcal{B}\\
|[name=kc]| SS\backslash LQS & |[name=kd]| \mathcal{B}_0\backslash \mathcal{B}\\};

\draw[->, thick] (ka) edge node[auto] {$\varPhi$} (kb)
                         (ka) edge node[auto] {$\pi$} (kc)
                         (kc) edge node[auto] {$\hat{\varPhi}$} (kd)
                         (kb) edge node[auto] {$p$} (kd)
                         ;
\end{tikzpicture}

In the second step, we prove $\hat{\varPhi}$ is locally one-to-one.  Assume $h$ be a fixed point in $LQS$. 
Pick a small neighborhood of $h$ in $LQS$ which by $\pi$ is mapped onto a neighborhood of $\pi(h)$ in $SS\backslash LQS$. Let $h_0$ and $h_1$ be two functions in this neighborhood, and $h_0 = g_0\circ f_0$, $h_1 = g_1\circ f_1$ be the Riemann factorizations of these two mappings. Suppose all these maps are normalized to fix $1, \, i$ and $-1$. In order to prove $\hat{\varPhi}$ is locally one-to-one, we need to show, if $\mathcal{S}(f_1) - \mathcal{S}(f_0) \in \mathcal{B}_0$, then there exists a 
$s \in SS$ such that $s \circ h_0 = h_1$.

According to the assumption that two functions $h_0$ and $h_1$ are in a small neighborhood of $h$ in $LQS$, there is a small constant $\epsilon > 0$  such that 
$\|\mathcal{S}(f_1) - \mathcal{S}(f_0)\|_{\mathcal{B}} < \epsilon.$ Since the inclusion map $i: \mathcal{B}(\Delta^{*}) \to B(\Delta^{*})$ is continuous, 
$\|\mathcal{S}(f_1) - \mathcal{S}(f_0)\|_{B} < C\epsilon.$  Let $f = f_1\circ f_0^{-1}$ be the conformal map of 
$\Omega_1^{*} = f_0(\Delta^{*})$. Denote by $\rho_{\Omega_1^{*}}$ and $\rho_{\Delta^{*}}$ the hyperbolic metric in $\Omega_1^{*}$ and $\Delta^{*}$, respectively, that is, 
$\rho_{\Omega_1^{*}} = \rho_{\Delta^{*}} \circ f_0^{-1}|(f_0^{-1})^{'}|$.  The cocycle identity for the Schwarzian derivative implies 
\begin{equation*}
\begin{split}
|\mathcal{S}(f)| \rho_{\Omega_1^{*}}^{-2} & = |(\mathcal{S}(f_1) - \mathcal{S}(f_0))\circ f_0^{-1}(f_0^{-1})^{'2}|(\rho_{\Delta^{*}}\circ f_0^{-1} |(f_0^{-1})^{'}|)^{-2}\\
& = (|\mathcal{S}(f_1) - \mathcal{S}(f_0)|\rho_{\Delta^{*}}^{-2})\circ f_0^{-1}.\\
\end{split}
\end{equation*}
Thus, $\sup_{z \in \Omega_1^{*}}|\mathcal{S}(f)| \rho_{\Omega_1^{*}}^{-2} = \|\mathcal{S}(f_1) - \mathcal{S}(f_0)\|_{B} < C\epsilon.$

Let $f^{\nu} = DE(h_0)$ be Douady-Earle extension of $h_0$ with complex dilatation equal to $\nu$ in $\Delta$. Then $f_{\nu} = g_0^{-1}\circ f^{\nu}$ is a quasiconformal extension of $f_0$ to $\hat{\mathbb{C}}$. The Earle-Nag reflection \cite[p.263]{GL} associated with the curve $\Gamma_1 = \partial \Omega_1$ is given by the formula 
$$
\gamma(z) = \begin{cases}
f_0 \circ j \circ f_{\nu}^{-1}(z) = f_0 \circ j \circ DE(g_0 \circ f_0)^{-1}\circ g_0, &  z \in \Omega_1\\
z, & z \in \Gamma_1\\
\gamma^{-1}(z), & z \in \Omega_1^{*}
\end{cases}
$$
where $j(z) = 1/\bar{z}$, and \cite[p.265]{GL}  says 
\begin{equation}\label{gamma}
C_1^{-1}(\|\nu\|_{\infty}) \leqslant |\gamma(z) - z|^2 \rho_{\Omega_1^{*}}^{-2} (\gamma(z)) |\bar{\partial} \gamma(z)| \leqslant C_1(\|\nu\|_{\infty}).
\end{equation}
Under the condition that $\sup_{z \in \Omega_1^{*}}|\mathcal{S}(f)| \rho_{\Omega_1^{*}}^{-2}$ is sufficiently small,   Ahlfors \cite{Ah66}, Earle-Nag \cite{EN} (see also \cite[p.266]{GL}) proved that $f$ can be extended to a quasiconformal mapping $f_{\mu}$ in $\Omega_1$ whose complex dilatation $\mu$ satisfies
\begin{equation*}
\mu(z) = \frac{\mathcal{S}(f)(\gamma(z))(\gamma(z) - z)^2 \bar{\partial} \gamma(z)}{2 + \mathcal{S}(f)(\gamma(z))(\gamma(z) - z)^2 \partial \gamma(z)}, \;\;\; z \in \Omega_1.
\end{equation*}
Then by means of (\ref{gamma}) we have
\begin{equation*}
|\mu(z)| \leqslant C_2(\|\nu\|_{\infty}) |\mathcal{S}(f)(\gamma(z))|\rho_{\Omega_1^{*}}^{-2} (\gamma(z)), \;\;\; z \in \Omega_1.
\end{equation*}

Note that on the unit circle $h_1 \circ h_0^{-1} = (g_1 \circ f_1)\circ (g_0 \circ f_0)^{-1} = g_1 \circ f \circ g_0^{-1}$ and in the unit disk $g_1 \circ f_{\mu} \circ g_0^{-1}$ is an extension of $h_1 \circ h_0^{-1}$, whose complex dilatation has the form
$$\mu \circ g_0^{-1} \frac{\overline{(g_0^{-1})^{'}}}{(g_0^{-1})^{'}}.$$
To prove $h_1 \circ h_0^{-1} \in SS$, it is sufficiently to show that 
$$\frac{|\mu \circ g_0^{-1}(z)|^2}{1 - |z|^2}dxdy \in CM_0(\Delta).$$
By simple computation, 
\begin{equation*}
\begin{split}
|\mu \circ g_0^{-1}| & \leqslant C_2(\|\nu\|_{\infty}) |\mathcal{S}(f)(f_0 \circ j \circ f_{\nu}^{-1}\circ g_0^{-1})|\rho_{\Omega_1^{*}}^{-2} (f_0 \circ j \circ f_{\nu}^{-1}\circ g_0^{-1})\\
& = C_2(\|\nu\|_{\infty}) |\mathcal{S}(f)(f_0 \circ j \circ (f^{\nu})^{-1})|\rho_{\Omega_1^{*}}^{-2} (f_0 \circ j \circ (f^{\nu})^{-1}).\\
\end{split}
\end{equation*}
Then,
\begin{equation*}
\begin{split}
|\mu \circ g_0^{-1}\circ f^{\nu}| & \leqslant C_2(\|\nu\|_{\infty}) |\mathcal{S}(f)(f_0 \circ j )|\rho_{\Omega_1^{*}}^{-2} (f_0 \circ j )\\
& =   C_2(\|\nu\|_{\infty}) |(\mathcal{S}(f_1) - \mathcal{S}(f_0))(j(z)) ((f_0^{'})(j(z)))^{-2}| (|j(z)|^2 - 1)^2 |(f_0^{'})(j(z))|^2\\
& =   C_2(\|\nu\|_{\infty}) |(\mathcal{S}(f_1) - \mathcal{S}(f_0))(j(z))| (|j(z)|^2 - 1)^2.\\
\end{split}
\end{equation*}
Consequently, by $\mathcal{S}(f_1) - \mathcal{S}(f_0) \in \mathcal{B}_0$ again and
\begin{equation*}
\begin{split}
\Big(\frac{|\mu \circ g_0^{-1} \circ f^{\nu}(\bar{t})|^2}{1 - |\bar{t}|^2}\Big)& \circ \frac{1}{w}\frac{1}{|w|^2}  = 
\frac{|\mu \circ g_0^{-1} \circ f^{\nu}(\frac{1}{\bar{w}})|^2}{|w|^2 - 1} = \frac{|\mu \circ g_0^{-1} \circ f^{\nu}|^2}{1 - |z|^2}\\
& \leqslant C_2^2(\|\nu\|_{\infty})|(\mathcal{S}(f_1) - \mathcal{S}(f_0))(w)|^2 (|w|^2 - 1)^3,\\
\end{split}
\end{equation*}
where $w = \frac{1}{\bar{z}} \in \Delta^{*},$  we have 
$$\frac{|\mu \circ g_0^{-1} \circ f^{\nu}(z)|^2}{1 - |z|^2}dxdy \in CM_0(\Delta).$$
It follows from bi-Lipschitz continuity of the Douady-Earle extension $f^{\nu}$ under the Poincar\'e metric that 
\begin{equation*}
\begin{split}
\frac{|\mu \circ g_0^{-1}(z)|^2}{1 - |z|^2}dxdy & \leqslant C_3(\|\nu\|_{\infty})\frac{|\mu \circ g_0^{-1}|^2}{1 - |z|^2}
\frac{1 - |z|^2}{1 - |(f^{\nu})^{-1}|^2}|\partial((f^{\nu})^{-1})|dxdy\\
& = C_3(\|\nu\|_{\infty})\Big(\frac{|\mu \circ g_0^{-1} \circ f^{\nu}|^2}{1 - |z|^2}\Big)\circ (f^{\nu})^{-1}|\partial((f^{\nu})^{-1})|dxdy \in CM_0(\Delta).\\
\end{split}
\end{equation*}
We conclude that $h_1 \circ h_0^{-1} \in SS$.\\

Finally, we claim that the local injectivity can be improved as in \cite[p.320]{GL} to be globally injective.  Here is an outline of the proof following the idea of Jeremy Kahn used for the analogous statement in \cite{GL}.

For convenience we switch to the half-plane. The setting is then two pairs of normalized welding maps $f_j,g_j$ for a normalized elements of $SQS,\,h_j=g_j\circ f_j$, with $g_j^{-1}$ and $f_j$ mapping respectively the upper and lower half-plane to the two complementary components of a chord-arc curve $\Gamma_j.$ We assume that $S(f_1)-S(f_0)$ leads to a small Carleson measure and we want to prove that it follows that $log h'\in VMO(\R)$, where $h=h_1\circ h_0^{-1}$.  Let $R_0$ be a square in the lower half-plane whose base is a small interval $I$ of the real axis. Its image $R$  by $f_0$ is a chord-arc domain and the restriction on $R$ of the Schwarzian derivative of $f=f_1\circ f_0^{-1}$ leads, via the Bers embedding, to a Carleson measure with small norm. Therefore, using Earle-Nag reflection, it can be shown that $f=f_1\circ f_0^{-1}$ extends to a global quasiconformal map $F$ whose dilatation leads to a Carleson measure with small norm wrt the complement of $R$. Pulling back this information via $f_0$ one then finds that $h=h_1\circ h_0^{-1}$ extends to a quasiconformal mapping $G$ of a domain of the form $\Omega_0=R_0\cup\R_0^*$ where $\R_0^*$ is a chord-arc domain included in the upper-half-plane, sending $\Omega_0\cup\R$ into $\R$,  such that $\partial \Omega_0\cap \R=\partial R_0\cap \R$, and whose dilatation leads to a Carleson measure with small norm.

Such a map can easily be extended to a  quasiconformal self-mapping of the upper half plane whose dilatation leads to a Carleson measure with small norm. We then invoke \cite{FKP}: $log(G')\vert I \vert =log h'$ has a small $BMO$ norm which is controlled by $\epsilon(\vert I\vert)$ with $\epsilon(t)\to 0$ as $t\to 0$. We conclude that $log h'$ must be in $VMO$, the fact we intended to prove.

\section{Appendix: another proof of global injectivity}

After finishing all above sections, we find, following  the idea of Matsuzaki in \cite{Ma07}, the global injectivity of the map $\hat{\varPhi}$ 
from $SS\backslash LQS$ into $\mathcal{B}_0\backslash \mathcal{B}$ can be obtained in a totally different way. The strategy is simple and can be explained as follows. Assume we have an asymptotically conformal homeomorphism $f$ in the situation we consider, then we can decompose $f$ into two quasiconformal homeomorphisms. One is within the neighborhood where the local injectivity can be applied (the local injectivity has been proved before), and the other is asymptotically conformal whose support of the complex dilatation is contained in a compact subset. We easily see that the latter mapping comes from the trivial coset. The argument for the rigorous proof is as follows. 

Let $h_0 = g_0\circ f_0$ and $h_1 = g_1\circ f_1$ be welding homeomorphisms corresponding to chord-arc curves $\Gamma = \partial \Omega = \partial \Omega^{*}$ and 
 $\Gamma_1 = \partial\Omega_1 = \partial\Omega_1^{*}$, respectively. Suppose all these maps are normalized to fix $1, i$ and $-1$. We will show, if 
 $\mathcal{S}(f_1) - \mathcal{S}(f_0) \in \mathcal{B}_0$ then $h_1\circ h_0^{-1} \in SS.$
 
 We adopt the same notations as before. Let $f^{\lambda} = DE(h_0)$ be the Douady-Earle extension of $h_0$ with complex dilatation equal to $\lambda$ in $\Delta$. Then 
 $f_{\lambda} = g_0^{-1}\circ f^{\lambda}$ is an extension of $f_0$ to $\hat{\mathbb{C}}$.  Suppose $\mathcal{S}(f_1) - \mathcal{S}(f_0) = \varphi \in \mathcal{B}_0 \subset B_0$. Let the subset $M_0(\Delta)$ of $M(\Delta)$ consist of all Beltrami coefficients vanishing at the boundary. 
 It follows from Theorem \ref{2.1} (see also \cite[Theorem 3.6]{Ma}) that there exists a Beltrami coefficient $\mu \in M_0(\Delta)$ such that 
 $\varPhi(\mu * \lambda) = \mathcal{S}(f_1)$, which says $f^{\mu}\circ f^{\lambda}$ is an extension of $h_1$ to the unit disk $\Delta$. Thus 
 $g_1^{-1}\circ f^{\mu}\circ f^{\lambda}$ is a quasiconformal extension of $f_1$ to $\hat{\mathbb{C}}$. Let $\hat{f}$ be a quasiconformal homeomorphism of the whole plane          $\hat{\mathbb{C}}$ equal to $g_1^{-1}\circ f^{\mu}\circ f^{\lambda}\circ f_{\lambda}^{-1} = g_1^{-1}\circ f^{\mu}\circ g_0$ in $\Omega$ and equal to $f_1\circ f_0^{-1}$ in 
 $\Omega^{*}$. Then the complex dilatation $\hat{\mu}$ of $\hat{f}$ vanishes at the boundary $\Gamma$. In particular, for any $\epsilon > 0$, we can choose a compact subset $\Omega_0 \subset \Omega$ such that 
 $$12\|\hat{\mu}|_{\Omega - \Omega_0}\|_{\infty} \leqslant \epsilon.$$
 We decompose $\hat{f}$ into $\hat{f_0}\circ \hat{f_1}$ as follows. The quasiconformal homeomorphism $\hat{f_1}: \hat{\mathbb{C}} \to \hat{\mathbb{C}}$ is chosen so that its complex dilatation coincides with $\hat{\mu}$ on $\Omega - \Omega_0$ and zero elsewhere. Then $\hat{f_0}$ is defined to be $\hat{f}\circ \hat{f_1}^{-1}$. Thus the complex dilatation $\mu(\hat{f_0})$ of $\hat{f_0}$ is zero in $\Omega_2 - \hat{f_1}(\Omega_0)$. Here $\Omega_2 = \hat{f_1}(\Omega)$. So $\mu(\hat{f_0})$ induces a vanishing Carleson measure in $\Omega_2$. It follows from Lemma \ref{schwarz} that $|\mathcal{S}(\hat{f_0})|^2\rho_{\Omega_2^{*}}^{-3}dxdy \in CM_0(\Omega_2^{*})$, and then 
 $$|\mathcal{S}(\hat{f_0})\circ (\hat{f_1}\circ f_0)(\hat{f_1}\circ f_0)^{'2}|^2\rho_{\Delta^{*}}^{-3}dxdy = (|\mathcal{S}(\hat{f_0})|^2\rho_{\Omega_2^{*}}^{-3})\circ 
 (\hat{f_1}\circ f_0)|(\hat{f_1}\circ f_0)^{'}|dxdy \in CM_0(\Delta^{*}),$$
 which implies $\mathcal{S}(\hat{f_0})\circ (\hat{f_1}\circ f_0)(\hat{f_1}\circ f_0)^{'2} \in \mathcal{B}_0(\Delta^{*})$. Thus, we have 
 \begin{equation}\label{mathcalB0}
 \begin{split}
 \mathcal{S}(\hat{f_1}\circ f_0) - \mathcal{S}(f_0) & = \mathcal{S}(\hat{f_0}\circ \hat{f_1}\circ f_0) - \mathcal{S}(f_0) - \mathcal{S}(\hat{f_0})\circ (\hat{f_1}\circ f_0)(\hat{f_1}\circ f_0)^{'2}\\
 & = \mathcal{S}(f_1) - \mathcal{S}(f_0) - \mathcal{S}(\hat{f_0})\circ (\hat{f_1}\circ f_0)(\hat{f_1}\circ f_0)^{'2} \in \mathcal{B}_0(\Delta^{*}).\\
\end{split}
\end{equation}

Let $\hat{g_1}: \Omega_2 \to \Delta$ be conformal and $\hat{h_1} = \hat{g_1}\circ \hat{f_1}\circ f_0$  be the welding homeomorphism corresponding to the chord-arc curve 
$\Gamma_2 = \partial \Omega_2$. Then we have
\begin{equation*}
 \begin{split}
h_1 = g_1 \circ f_1 & = g_1\circ \hat{f}\circ f_0 = g_1\circ \hat{f_0}\circ \hat{f_1}\circ f_0 \\
& = g_1\circ \hat{f_0}\circ \hat{g_1}^{-1} \circ \hat{g_1} \circ \hat{f_1}\circ f_0 = g_1\circ \hat{f_0}\circ \hat{g_1}^{-1} \circ \hat{h_1}.\\
\end{split}
\end{equation*}
The complex dilatation of $g_1\circ \hat{f_0}\circ \hat{g_1}^{-1}$ induces a vanishing Carleson measure in the unit disk $\Delta$. Then we have 
$g_1\circ \hat{f_0}\circ \hat{g_1}^{-1} \in SS$ in the unit circle $\mathbb{S}$. Thus, in order to prove $h_1\circ h_0^{-1} \in SS,$ we just need to show 
$\hat{h_1}\circ h_0^{-1} \in SS$. 

By simple computation, $|\mathcal{S}(\hat{f_1}\circ f_0)(z) - \mathcal{S}(f_0)(z)|\rho_{\Delta^{*}}^{-2}(z) = |\mathcal{S}(\hat{f_1})(\zeta)|\rho_{\Omega^{*}}^{-2}(\zeta)$ for 
$\zeta = f_0(z)$ and this is bounded by $12\|\hat{\mu}_{\Omega - \Omega_0}\|_{\infty}$ (see \cite[p.72]{Le}). Then 
\begin{equation}\label{B0}
\|\mathcal{S}(\hat{f_1}\circ f_0) - \mathcal{S}(f_0)\|_B \leqslant \epsilon.
\end{equation}
Combining (\ref{mathcalB0}), (\ref{B0}) and the local injectivity claim of the map $\hat{\varPhi}$, we obtain $\hat{h_1}\circ h_0^{-1} \in SS$ and then the global injectivity of $\hat{\varPhi}$ from  $SS\backslash LQS$ into $\mathcal{B}_0\backslash \mathcal{B}$.

\begin{center}
\begin{minipage}{140mm}
{\small{\bf Acknowledgement}.
The authors would like to thank the referee for a very careful reading of the manuscript and for sentence-by-sentence corrections.
}

\end{minipage}
\end{center}

\end{document}